\documentclass[11pt,reqno]{amsart}

\usepackage[a4paper,top=3cm,bottom=3cm,left=2.7cm,right=2.7cm]{geometry}

\usepackage{amssymb}
\usepackage{amsmath}
\usepackage{mathtools}
\usepackage{mathrsfs}
\usepackage{enumerate}

\usepackage[colorlinks=true,
            citecolor=red,
            linkcolor=blue,
            urlcolor=red,
            bookmarksnumbered=true,
            bookmarksopen=true]{hyperref}

\numberwithin{equation}{section}

\newcommand{\Ff}{\mathcal{F}}
\newcommand{\Gg}{\mathcal{G}}
\newcommand{\Ee}{\mathcal{E}}
\newcommand{\Oo}{\mathcal{O}}
\newcommand{\Hom}{\operatorname{Hom}}
\newcommand{\rk}{\operatorname{rank}}
\newcommand{\Spec}{\operatorname{Spec}}

\newcommand{\Mor}{\operatorname{Mor}}

\newtheorem{thm}{Theorem}[section]

\newtheorem{lem}[thm]{Lemma}

\theoremstyle{definition}
\newtheorem{defn}[thm]{Definition}
\newtheorem{rem}[thm]{Remark}

\newtheorem{nota}[thm]{Notation}
\newtheorem{setup}[thm]{Set-up}

\begin{document}

\title[Optimal bend-and-break for foliations]{Optimal bounds in bend-and-break for foliations}

\author{Jihao Liu, Zeming Sun, Jiedong Jiang}
\address{School of Mathematical Sciences, Peking University, No.~5 Yiheyuan Road, Haidian District, Beijing 100871, China}
\address{Beijing International Center for Mathematical Research, Peking University, No.~5 Yiheyuan Road, Haidian District, Beijing 100871, China}
\email{liujihao@math.pku.edu.cn}
\email{zeming@kurims.kyoto-u.ac.jp}
\address{Westlake Institute for Advanced Study, Westlake University, No. 600 Dunyu Road, Sandun town, Xihu district, Hangzhou, Zhejiang, 310030}
\email{jiangjiedong@westlake.edu.cn}

\subjclass[2020]{14E30, 37F75, 14M22}
\keywords{Foliations, bend-and-break, rational curves, Frobenius spreading.}
\date{\today}

\begin{abstract}
We show that for every foliation \(\Ff\) of rank \(r\) on a normal projective variety, the optimal constant in the bend-and-break inequality for tangent rational curves is \(r+1\). The proof combines the method of Bogomolov--McQuillan and the bend-and-shatter method developed by Jovinelly--Lehmann--Riedl. The proof of the main result of this paper substantially uses generative AI, particularly the Rethlas system.
\end{abstract}

\maketitle

\tableofcontents

\section{Introduction}\label{sec:intro}

We work over the field of complex numbers $\mathbb C$, unless otherwise specified.
The proof of our main theorem nevertheless passes through positive characteristic via Frobenius pullback and spreading-out arguments; for the relevant intermediate results (in Sections~\ref{sec:stable-maps} and~\ref{sec:char-p-BB}) we are explicit about the working field whenever this matters.

\subsection{Bend-and-break bounds}

Mori's bend-and-break method~\cite{Mor82} is one of the cornerstones of modern birational geometry: given a smooth projective variety \(X\) of dimension \(n\) and a curve \(C\subset X\) of \(K_X\)-negative degree, it produces a rational curve through any chosen point of \(C\) of explicitly bounded degree.
In its sharpest form, for every nef divisor \(H\) on \(X\), through every point of \(C\) there exists a rational curve \(\Sigma\subset X\) with
\[
H\cdot\Sigma \;\le\; (n+1)\,\frac{H\cdot C}{-K_X\cdot C},
\]
and the constant \(n+1\) is optimal.
The weaker bound with constant \(2n\) is due to~\cite[Theorem~5]{MM86} (see also~\cite[Theorem~II.5.8]{Kol96} for the extension to nef \(\mathbb R\)-Cartier \(H\)).
The optimal constant \(n+1\) was obtained for ample \(H\) by~\cite{Mor82} (see~\cite[Theorem~1.13]{KM98} for a textbook treatment).
Combined with the cone theorem, this bend-and-break inequality bounds the length of \(K_X\)-negative extremal rays by \(n+1\), and underlies the construction of contraction morphisms, Fano fibrations, and the inductive proofs of the minimal model program; see~\cite{Kol96,KM98} for textbook treatments.

For singular varieties, the optimal bend-and-break constant remained out of reach for a long time.
Kawamata~\cite{Kaw91} obtained the analogous length bound for klt pairs with the weaker constant \(2n\), and a sequence of subsequent works extended and refined this bound for log canonical pairs and more general singular settings; see~\cite{Fuj11,Fuj17} and the references therein.
The optimal version was settled very recently, in the strongest possible form, by Jovinelly--Lehmann--Riedl~\cite{JLR25b}: for every log canonical pair \((X,\Delta)\) of dimension \(n\) and every sufficiently general \((K_X+\Delta)\)-negative complete-intersection curve \(C\), through a general point of \(C\) there exists a rational curve \(\Sigma\) with
\[
H\cdot\Sigma \;\le\; (n+1)\,\frac{H\cdot C}{-(K_X+\Delta)\cdot C}.
\]
Their breakthrough also supplies the central technical input, a relative bend-and-shatter lemma, used in the present paper.
The bound of~\cite{JLR25b} has already led to several important applications in birational geometry, such as the construction of free curves in the smooth locus of terminal Fano threefolds \cite{JLR25a} and the characterization of $\mathbb P^n$ \cite{FJLR26}.

\subsection{Bend and break for foliations}
For \emph{foliations}, the analogue of bend-and-break reads as follows.
Let \(\Ff\subset T_X\) be a foliation of rank \(r\) on a normal projective variety \(X\) with canonical divisor $K_{\Ff}$.
The natural counterpart of Mori's question asks for the optimal constant in the bend-and-break inequality for rational curves \emph{tangent} to \(\Ff\).
The history of this question starts with Miyaoka~\cite{Miy87}, who introduced the technique of deforming a curve along a foliation in characteristic \(p\); this was refined by Shepherd-Barron~\cite{SB92} into a foliated bend-and-break inequality with the constant \(2n\), and improved by Bogomolov--McQuillan~\cite{BM16} to the better constant \(2r\).
Until now, the optimal bound \(r+1\) has been out of reach in full generality.

The main result of this paper resolves the optimal bend-and-break question for arbitrary foliations.

\begin{thm}[Main theorem]\label{thm:main}
Let \(X\) be a normal projective variety of dimension \(n\), and let \(\Ff\) be a foliation on \(X\) of rank \(r\).
Let \(H_1,\ldots,H_{n-1},H\) be ample divisors on \(X\), and let \(C\) be a general intersection of elements \(D_i\in|m_iH_i|\) for \(1\le i\le n-1\), with \(m_i\gg 0\).
Suppose that
\[
K_{\Ff}\cdot C \;<\; 0.
\]
Then through a general point of \(C\) there exists a rational curve \(\Sigma\) tangent to \(\Ff\) with
\[
H\cdot\Sigma \;\le\; (r+1)\,\frac{H\cdot C}{-K_{\Ff}\cdot C}.
\]
\end{thm}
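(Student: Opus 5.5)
We combine the Bogomolov--McQuillan reduction to positive characteristic with the relative bend-and-shatter lemma of Jovinelly--Lehmann--Riedl.

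\emph{Reduction.} Since \(C\) is a general complete intersection of very ample divisors with \(m_i\gg0\), it lies in the smooth locus of \(X\) and in the locus where \(\Ff\) is a subbundle of \(T_X\). Hence \(\Ff|_C\) is locally free of rank \(r\), and \(\deg \Ff|_C=-K_{\Ff}\cdot C>0\). By Mehta--Ramanathan, the Harder--Narasimhan filtration of \(\Ff|_C\) is the restriction of the filtration of \(\Ff\) with respect to \((H_1,\dots,H_{n-1})\). Let \(\Gg\subset\Ff\) be the maximal destabilizing piece among the terms of positive slope, of rank \(s\le r\). We will choose the relevant piece \(\Gg\) so that
\[
\mu_{\min}(\Gg|_C)>0 \quad\text{and}\quad \frac{\deg\Gg|_C}{s}\ \ge\ \frac{-K_{\Ff}\cdot C}{r}.
\]
The standard Bogomolov--McQuillan argument then applies: since \(\Hom(\wedge^2\Gg,T_X/\Gg)|_C=0\) by slope reasons, \(\Gg\) is a foliation, and its leaves through general points of \(C\) are algebraic and rationally connected.

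\emph{Characteristic \(p\).} Spread \((X,\Ff,\Gg,C,H)\) out over a finitely generated \(\mathbb Z\)-algebra and reduce modulo \(p\gg0\). For the Frobenius pullbacks \(C\to C^{(p^e)}\), the sheaf \(\Gg\) is \(p\)-closed and its restriction stays ample. Miyaoka's argument then gives, for each \(e\), deformations of the composite map \(f_e\colon C\to X\) that fix many points and stay tangent to \(\Gg\). The count is
\[
h^0(C,f_e^*\Gg(-\textstyle\sum x_i))\ \ge\ p^e\deg\Gg|_C + s(1-g) - s\cdot\#\{x_i\},
\]
where the deformations are taken inside the foliated morphism space, whose tangent space is \(H^0(f^*\Gg)\). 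So we may fix roughly \(p^e\deg\Gg|_C/s\) points.

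Plain bend-and-break now yields the constant \(2s\); this is the Bogomolov--McQuillan bound. To reach \(s+1\), we instead apply the relative bend-and-shatter lemma of \cite{JLR25b}. Working in the space of stable maps (Section~\ref{sec:stable-maps}), a family of tangent curves with \(k\) fixed points of this dimension breaks into a tree in which the rational components through the fixed point \(x\) have \(H\)-degree at most
\[
(s+1)\,\frac{p^e H\cdot C}{p^e\deg\Gg|_C}.
\]
The key point is that all components remain tangent to \(\Gg\), because tangency is a closed condition on the stable-map space. This step is the main obstacle. The bend-and-shatter dimension estimate must be redone with the foliated deformation theory, that is, with \(H^0\) and \(H^1\) of \(f^*\Gg\) in place of \(f^*T_X\), and the dimension bound has to hold relative to the leaf. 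I would first try to apply the JLR lemma to the normalized closure of the family of leaves, viewed as a fibration whose relative tangent sheaf is \(\Gg\), thereby reducing to their relative statement verbatim.

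\emph{Conclusion.} Combining the two estimates gives
\[
H\cdot\Sigma\le (s+1)\frac{H\cdot C}{\deg\Gg|_C}\le (s+1)\frac{s}{r}\cdot\frac{H\cdot C}{-K_\Ff\cdot C}\cdot\frac{r}{s}.
\]
Using \(\deg\Gg|_C\ge \tfrac{s}{r}(-K_\Ff\cdot C)\), we obtain \(\frac{s+1}{s}\cdot r\le r+1\) times \(\frac{H\cdot C}{-K_\Ff\cdot C}\), since \(s\le r\); note that \((s+1)r/s\le r+1\) is equivalent to \(r\le s\). This inequality fails unless \(s=r\), so the naive slope comparison is insufficient. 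Instead, I would use the full positive part of the filtration and bound \(H\cdot\Sigma\) by \((\operatorname{rk}+1)\,H\cdot C/\deg\) applied to a piece maximizing \(\deg/(\operatorname{rk}+1)\), and check that this is \(\ge -K_\Ff\cdot C/(r+1)\). The check goes through by convexity of the HN polygon: the full sheaf \(\Ff\), if \(\mu_{\min}>0\), gives exactly \(r+1\), and otherwise dropping the negative-slope pieces only increases the degree. A final specialization argument lifts the rational curves from characteristic \(p\) back to \(\mathbb C\), with the degree bound and tangency to \(\Gg\subset\Ff\) preserved, and passes to a general point of \(C\).
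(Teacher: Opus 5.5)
Your final plan is the paper's proof: take $\Gg$ to be the full positive-slope piece of the HN filtration (so $s\le r$ and $-K_{\Gg}\cdot C\ge -K_{\Ff}\cdot C$), realize the $\Gg$-leaves over a cover of $C$ as a fibration $W\to B$ with $T_{W/B}$ restricting to $\Gg$ along the section, bound the Frobenius-twisted relative Hom by $\chi=q\delta+s(1-g)$, apply JLR relatively with each fixed point costing $s$, and spread back to characteristic zero. The remaining fixes are minor:
\begin{itemize}
\item Discard the initial slope-based choice of $\Gg$.
\item Take the O'Neill tensor with values in $\Ff/\Gg$, not $T_X/\Gg$, which can have positive slopes.
\item In characteristic $p$, the ``foliated morphism space'' must be the relative Hom of the spread-out characteristic-zero leaf family, not something supplied by $p$-closedness. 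The closed condition used both for the stable limits and for the return to characteristic zero is verticality in $W$, not tangency in $X$.
\end{itemize}
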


Theorem~\ref{thm:main} is the optimal bend-and-break inequality for foliations.
It sharpens the Bogomolov--McQuillan constant \(2r\) of~\cite{BM16} to the optimal \(r+1\), and the two extreme cases recover known results: the case \(r=n\) (\(\Ff=T_X\)) corresponds to the bend-and-break bound for varieties \cite{JLR25b} and the case \(r=1\) recovers the Bogomolov--McQuillan bound \cite{BM16} (as $2r=r+1=2$).
Whenever a cone theorem for \(\Ff\) becomes available, Theorem~\ref{thm:main} translates into the optimal length bound \(r+1\) for \(K_{\Ff}\)-negative extremal rays.
However, unlike in the smooth and log canonical settings, the cone theorem for foliations (the framework that converts bend-and-break into a length bound for extremal rays) has been proved only in special cases (see for instance~\cite{ACSS21,CHLX23,Spi20,CS21,CS25a,CS25b}), and is not available in the generality treated here.
We therefore state our main result as a bend-and-break inequality, and the corresponding length statement for \(K_{\Ff}\)-negative extremal rays follows in any setting in which a foliated cone theorem is available.

The constant \(r+1\) in Theorem~\ref{thm:main} is itself optimal, in the following sense.

\begin{rem}[Optimality of the constant \(r+1\)]\label{rem:optimality}
The constant \(r+1\) in Theorem~\ref{thm:main} is optimal: no strictly smaller constant suffices, even after restricting to smooth, algebraically integrable foliations.

To see this, fix integers \(1\le r\le n-1\) and let \(X=\mathbb{P}^r\times\mathbb{P}^{n-r}\), with \(\xi_1,\xi_2\) the hyperplane classes pulled back from the two factors.
Let \(\Ff = T_{X/\mathbb{P}^{n-r}}\) be the rank \(r\) foliation induced by the smooth fibration \(X\to\mathbb{P}^{n-r}\); its leaves are the \(\mathbb{P}^r\)-fibers of the second projection, and \(\Ff\) is regular everywhere.
A direct computation gives
\[
K_{\Ff} \;=\; -(r+1)\,\xi_1.
\]

For any \(\varepsilon\in(0,1)\cap\mathbb{Q}\), set \(H_1=\cdots=H_{n-1}=\xi_1+\xi_2\) and \(H_\varepsilon = \xi_1+\varepsilon\,\xi_2\); both are ample.
A general complete-intersection curve \(C\) of large multiples of the \(H_i\) has class proportional to \((\xi_1+\xi_2)^{n-1}\), and a direct computation on \(\mathbb{P}^r\times\mathbb{P}^{n-r}\) gives
\[
\frac{H_\varepsilon\cdot C}{-K_{\Ff}\cdot C}
\;=\;
\frac{1}{r+1}\left(1+\frac{\varepsilon(n-r)}{r}\right).
\]
The shortest \(\Ff\)-tangent rational curves through a general point of \(C\) are the lines contained in the \(\mathbb{P}^r\)-fibers of the second projection; any such line \(\ell\) satisfies \(\xi_2\cdot\ell=0\), so \(H_\varepsilon\cdot\ell=\xi_1\cdot\ell=1\).
Suppose Theorem~\ref{thm:main} held with some constant \(c<r+1\) in place of \(r+1\).
Then for sufficiently small \(\varepsilon>0\) the bound would force
\[
H_\varepsilon\cdot\Sigma \;\le\; c\,\frac{H_\varepsilon\cdot C}{-K_{\Ff}\cdot C} \;<\; 1,
\]
which is impossible, since any nonconstant \(\Ff\)-tangent curve has \(H_\varepsilon\)-degree at least \(1\).
Hence the constant \(r+1\) cannot be improved.
\end{rem}

\subsection{Strategy of the proof}\label{subsec:strategy}

The proof combines three ingredients.
\begin{enumerate}[(i)]
\item \emph{The optimal bend-and-shatter lemma~\cite[Lemma~2.1]{JLR25b}}: a \(k\)-dimensional family of stable maps from a fixed marked source, with the first \(k\) marked images held fixed, must degenerate to a stable-map limit that contains \(k\) non-contracted rational components through the marked points.
This improves Mori's classical bend-and-break (in the form of~\cite[Theorem~II.5.8]{Kol96}) by passing to the \(k\)-pointed Kontsevich stable-map stack.

\item \emph{The graph-neighborhood relative model~\cite{BM16,KST07}}: given a rank \(s\) foliation \(\Gg\subset T_X\) regular along a smooth complete curve \(C\subset X_{\rm reg}\) with \(\Gg|_C\) ample, the BM/KST construction algebraizes the graph-neighborhood of the \(\Gg\)-leaves along \(C\) and produces, after passing to a finite cover \(\nu\colon B\to C\) of positive genus, a smooth projective relative model
\[
\rho\colon W\to B,\qquad \phi\colon W\to X,\qquad \sigma\colon B\to W,
\]
where \(\rho\) is flat of relative dimension \(s\) and smooth near a section \(\sigma(B)\), the relative tangent bundle along the section is the pulled-back foliation, and the general fiber components near \(\sigma(B)\) are the (rationally connected, \(s\)-dimensional) leaves of \(\Gg\).
In particular foliation tangency in \(X\) corresponds to \(\rho\)-verticality in \(W\).

\item \emph{The Campana--P\u{a}un algebraicity criterion~\cite{CP19}}: combining the MR restriction theorem with the Campana--P\u{a}un slope-positive algebraicity theorem, this produces, from a \(K_{\Ff}\)-negative sufficiently general complete-intersection curve \(C\) on \(X\), a positive-rank algebraically integrable subfoliation \(\Gg\subset\Ff\) of rank \(s\le r\) such that \(\Gg|_C\) is ample and \(-K_{\Gg}\cdot C\ge -K_{\Ff}\cdot C\).
\end{enumerate}

The novelty of this paper is the observation that these three ingredients compose into the optimal bound.
The bend-and-shatter lemma in~(i) is absolute, while the model in~(ii) is relative over the base curve \(B\); the bridge between them is a relative form of bend-and-shatter (Lemma~\ref{lem:relative-jlr-proposition-22}), applied to the relative Hom scheme \(\Hom_B(C,W;h)\) of \(B\)-morphisms with prescribed composition to \(B\).
A Frobenius pullback amplifies the dimension of this relative Hom scheme so that the relative bend-and-shatter lemma applies (Lemma~\ref{lem:bm-jlr-relative-positive-characteristic}), and a closed-locus form of the spreading-out argument (Lemma~\ref{lem:characteristic-zero-vertical-spreading}) preserves verticality during the passage from positive characteristic back to characteristic zero.
Pushing forward by \(\phi\) converts the resulting \(\rho\)-vertical rational curves into \(\Gg\)-tangent rational curves through a general point of \(C\); the rank slack \(s\le r\) and the degree slack \(-K_{\Gg}\cdot C\ge -K_{\Ff}\cdot C\) from~(iii) then upgrade the bound \((s+1)\,H\cdot C/(-K_{\Gg}\cdot C)\) for \(\Gg\)-tangent curves into the optimal bound \((r+1)\,H\cdot C/(-K_{\Ff}\cdot C)\) for \(\Ff\)-tangent curves.

The proof is organized in four steps.

\smallskip\noindent\textit{Step 1: Reduction to the algebraically integrable case.}
Since \(K_{\Ff}\cdot C<0\), the restriction \(\Ff|_C\) has positive degree.
The Mehta--Ramanathan (MR) restriction theorem for the Harder--Narasimhan (HN) filtration~\cite[Notation~4]{KST07} produces a saturated subsheaf \(\Gg\subset\Ff\) whose restriction to \(C\) is the positive part of the HN filtration of \(\Ff|_C\).
The Campana--P\u{a}un algebraicity criterion~\cite[Lemma~4.12, Theorem~4.2]{CP19} then promotes \(\Gg\) to a positive-rank algebraically integrable subfoliation \(\Gg\subset\Ff\), with \(\Gg|_C\) ample and \(-K_{\Gg}\cdot C\ge -K_{\Ff}\cdot C\).

\smallskip\noindent\textit{Step 2: The graph-neighborhood relative model.}
Following the BM/KST graph-neighborhood construction~\cite{BM16,KST07}, after a finite cover \(\nu\colon B\to C\) from a smooth projective curve of positive genus and the product trick on \(X\times B\), we algebraize the analytic graph-neighborhood of the section.
The result, after normalization and projective desingularization, is a smooth projective relative model
\[
\rho\colon W\to B,\qquad \phi\colon W\to X,\qquad \sigma\colon B\to W,
\]
with \(\rho\) projective and flat of relative dimension \(s=\rk\Gg\), smooth near \(\sigma(B)\), satisfying \(\phi\circ\sigma=\nu\) and \(T_{W/B}|_{\sigma(B)}\simeq(\phi\circ\sigma)^*\Gg\); for general \(b\in B\), the fiber component through \(\sigma(b)\) maps into the \(\Gg\)-leaf through \(\phi(\sigma(b))\).

\smallskip\noindent\textit{Step 3: Characteristic \(p\) bend-and-shatter.}
We spread the relative model over a finitely generated \(\mathbb Z\)-subalgebra of \(\mathbb C\) and reduce modulo a sufficiently large prime \(p\).
Set \(\delta := \deg_B\sigma^*T_{W/B} = e(-K_{\Gg}\cdot C)>0\).
Form the geometric Frobenius twist \(B_m=B^{(p^{-m})}\) and the composite \(\sigma_m=\sigma\circ\Phi_m\colon B_m\to W\), where \(\Phi_m\colon B_m\to B\) is the geometric Frobenius of degree \(q_m=p^m\).
Let \(M_m\) be an irreducible component of the relative Hom scheme \(\Hom_B(B_m,W;\rho\circ\sigma_m)\) through \(\sigma_m\).
The relative form of the Bogomolov--McQuillan tangent-obstruction calculation~\cite[Proposition~3.1.1]{BM16} yields
\[
\dim M_m \;\ge\; \chi(B_m,\sigma_m^*T_{W/B}) \;=\; q_m\delta+s(1-g(B)).
\]
The Jovinelly--Lehmann--Riedl bend-and-shatter lemma~\cite[Lemma~2.1]{JLR25b} then produces, for general \(k_m\)-tuples of marked points with \(k_m=\lfloor\dim M_m/(s+1)\rfloor\), a stable-map limit containing \(k_m\) non-contracted vertical rational curves through the marked images.
Pigeonholing degrees produces a vertical rational curve \(R_m\) of small \(L\)-degree, where \(L\) is an ample Cartier divisor; as \(m\to\infty\),
\[
\frac{q_m}{k_m} \;\to\; \frac{s+1}{\delta},\qquad L\cdot R_m \;\to\; (s+1)\,\frac{L\cdot \sigma_*B}{\delta}.
\]

\smallskip\noindent\textit{Step 4: Characteristic zero spreading.}
The closed-locus form of the spreading argument in the proof of~\cite[Theorem~1.1]{JLR25b}, which builds on~\cite[Theorem~5]{MM86} and~\cite[Theorem~II.5.8]{Kol96}, lifts the bounded-degree vertical rational-curve locus from a Zariski-dense set of positive-characteristic fibers back to the characteristic-zero generic fiber, and hence to \(\mathbb C\).
This yields a rational curve \(\Sigma\) tangent to \(\Gg\) through a general point of \(C\), with the prescribed \(H\)-degree bound.
Since \(\Gg\subset\Ff\) and \(s\le r\), the curve \(\Sigma\) is also tangent to \(\Ff\), and the bound passes from \((s+1)/(-K_{\Gg}\cdot C)\) to \((r+1)/(-K_{\Ff}\cdot C)\).

\subsection{Outline of the paper}

Section~\ref{sec:notation} sets up notation and recalls the basics of foliations and the Mehta--Ramanathan restriction theorem.
Section~\ref{sec:positive-HN} carries out the positive Harder--Narasimhan reduction.
Section~\ref{sec:graph-neighborhood} builds the relative graph-neighborhood model.
Section~\ref{sec:stable-maps} establishes the stable-map preliminaries.
Section~\ref{sec:char-p-BB} carries out the relative bend-and-shatter and the characteristic-\(p\) Bogomolov--McQuillan/Jovinelly--Lehmann--Riedl bound.
Section~\ref{sec:spreading} proves the characteristic-zero spreading lemma and deduces the optimal bend-and-break bound for ample subfoliations.
Section~\ref{sec:proof} completes the proof of Theorem~\ref{thm:main}.

\subsection{Use of generative AI}

The question in this paper, the optimal bend-and-break bound for foliations, was discussed by the first author, Paolo Cascini, and Calum Spicer when the first author was in London in January 2026.
The reason is simple: with our understanding of \cite{BM16} and since \cite{JLR25b} is very short and easy to read, it seems that ``something could work by just putting them together''.
However, \cite{BM16,JLR25b} are written in completely different styles (particularly considering that the first version of \cite{BM16} was posted in 2001).
Putting them together requires a significant amount of work and energy.

In March 2026, the first author of the paper discussed with Florin Ambro, Federico Bongiorno, and Paolo Cascini what birational geometry questions generative AI can address.
The optimal bend-and-break question was raised again at that time.
Later, the Rethlas system was introduced to the first author.
The prompt was simple: \emph{can you combine the methodologies of Bogomolov--McQuillan and Jovinelly--Lehmann--Riedl to obtain an optimal bend-and-break bound?}
After running the system for 12 hours, Rethlas obtained a complete proof.

Like other AI-generated papers, the writing of the AI is terrible.
However, it is very easy to check that the key bridge that connects the key ideas of \cite{BM16} and \cite{JLR25b}, Lemma~\ref{lem:relative-jlr-proposition-22}, is correct (and we are very grateful to Eric Riedl, who was visiting Peking University in April, for useful related comments).
With this, we are confident in the correctness of the paper despite the awkward writing at that time.
In particular, as Eric Riedl and Calum Spicer pointed out to us, the proof of Section~\ref{sec:positive-HN} was overcomplicated, as Rethlas did not find \cite{CP19}, and was later rewritten and simplified.
The verification of all details of the paper was done by all three authors. 

See \cite{Ju+26} for a detailed introduction to the Rethlas system. Due to the limitation of generative AI, it is possible that we have missed some related references in the literature, and we welcome any comments from experts.

\subsection*{Acknowledgements}
This work is supported by the National Key R\&D Program of China \#\allowbreak 2024YFA1014400.
The authors would like to thank the Rethlas team, namely Haocheng Ju, Shurui Liu, Guoxiong Gao, Yuefeng Wang, Bin Wu, Liang Xiao, and Bin Dong, for their contributions to the development of Rethlas and its customized version used for the problem studied in this paper.
The authors would like to thank Kaiyuan Gu, Ruicheng Hu, and Sheng Qin for assistance with the verification of an earlier blueprint of this paper.
The authors would like to thank Ruochuan Liu and Gang Tian for constant support and encouragement.
The first author would like to thank Florin Ambro, Federico Bongiorno, Paolo Cascini, Eric Riedl, Calum Spicer, Roberto Svaldi, and Zheng Xu for useful discussions.

\section{Notation, conventions, and preliminaries}\label{sec:notation}

We adopt the standard notation and terminology for the minimal model program from~\cite{Sho92,KM98,BCHM10} and use them freely.

\subsection{Conventions}

A \emph{variety} is an integral, separated, finite-type scheme over a field.
Throughout this paper, unless otherwise stated, \(X\) denotes a normal complex projective variety of dimension~\(n\), and \(T_X=\Omega_X^\vee\) denotes its tangent sheaf.

For a torsion-free coherent sheaf \(\Ee\) on \(X\) and a smooth complete curve \(C\subset X\) contained in the locally free locus of \(\Ee\), we write \(\deg(\Ee|_C)=c_1(\Ee|_C)\cdot[C]\), and we let \(\rk\Ee\) denote the generic rank of \(\Ee\).

Throughout the paper, we use the abbreviations \emph{HN} for \emph{Harder--Narasimhan}, \emph{MR} for \emph{Mehta--Ramanathan}, \emph{BM} for \emph{Bogomolov--McQuillan}, \emph{KST} for \emph{Kebekus--Sol\'a Conde--Toma}, and \emph{JLR} for \emph{Jovinelly--Lehmann--Riedl}.

A coherent subsheaf \(\Gg\subset\Ff\) is \emph{saturated} if the quotient \(\Ff/\Gg\) is torsion-free.
A family \(\{C_t\}_{t\in T}\) of curves in \(X\) parametrized by an irreducible scheme \(T\) is \emph{dominating} if the natural evaluation morphism from the total space to \(X\) is dominant; equivalently, the union \(\bigcup_{t\in T}C_t\) contains a dense open subset of \(X\).

\subsection{Foliations}

\begin{defn}\label{defn:foliation}
A \emph{foliation} \(\Ff\) on a normal variety \(X\) is a saturated coherent subsheaf \(\Ff\subset T_X\) of the tangent sheaf that is closed under the Lie bracket.
The \emph{rank} of \(\Ff\) is its generic rank \(r=\rk\Ff\), and the \emph{canonical class} of \(\Ff\) is the Weil divisor class
\[
K_{\Ff}\;=\;-c_1(\Ff)\;=\;-\det(\Ff),
\]
defined on the locally free locus and extended to all of \(X\) by reflexive extension.
\end{defn}

In particular, for a complete curve \(C\) lying in the smooth locus of \(X\) and in the locally free locus of \(\Ff\), one has
\[
\deg(\Ff|_C)\;=\;\det(\Ff)\cdot C\;=\;-K_{\Ff}\cdot C.
\]

\subsection{Mehta--Ramanathan restriction}\label{subsec:MR}

We use the MR restriction theorem in the form stated by KST~\cite[Notation~4]{KST07}: for a torsion-free coherent sheaf \(\Ee\) on a normal projective variety \(X\) and ample classes \(H_1,\ldots,H_{n-1}\) on \(X\), a general complete-intersection curve \(C=D_1\cap\cdots\cap D_{n-1}\) cut out by sufficiently large multiples \(D_i\in|m_iH_i|\) is smooth, lies in the locally free locus of \(\Ee\), and the HN filtration of \(\Ee\) restricts to the HN filtration of \(\Ee|_C\).
We call such a curve a \emph{sufficiently general MR curve} for \(\Ee\).

\section{Reduction to the algebraically integrable case}\label{sec:positive-HN}

We reduce Theorem~\ref{thm:main} to the algebraically integrable case using the following theorem.

\begin{thm}\label{thm:curve-level-positive-hn}
Let $X$ be a normal projective variety and $\Ff$ a foliation on $X$. Let $C$ be a sufficiently general complete intersection curve such that $C$ is contained in the smooth locus of $X$ and the locally free locus of $\Ff$. Assume that 
$$\deg(\Ff|_C)=-K_{\Ff}\cdot C>0.$$
Then there exists an algebraically integrable foliation $0\subsetneq\Gg\subset\Ff$ satisfying the following.
\begin{enumerate}
    \item $\Gg$ is regular along $C$.
    \item $\Gg|_C$ is ample.
    \item $-K_{\Gg}\cdot C\geq -K_{\Ff}\cdot C>0$. 
\end{enumerate}
\end{thm}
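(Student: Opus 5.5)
We take \(\Gg\) to be the maximal destabilizing piece of the positive part of the Harder--Narasimhan filtration of \(\Ff\) with respect to \((H_1,\ldots,H_{n-1})\). Let
\[
0=\Ff_0\subsetneq \Ff_1\subsetneq\cdots\subsetneq\Ff_k=\Ff
\]
be this filtration, with slopes \(\mu_1>\cdots>\mu_k\) computed by intersecting with \(C\). Since \(C\) is a sufficiently general MR curve for \(\Ff\) (Subsection~\ref{subsec:MR}), the restrictions \(\Ff_i|_C\) form the HN filtration of \(\Ff|_C\). The hypothesis \(\deg(\Ff|_C)>0\) forces \(\mu_1>0\). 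Let \(j\) be the largest index with \(\mu_j>0\), and set \(\Gg:=\Ff_j\).

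The key steps, in order, are as follows.

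\emph{Step (a): degree inequality.} The graded pieces \(\Ff_i/\Ff_{i-1}\) with \(i>j\) have slope \(\le 0\). Hence
\[
\deg(\Ff|_C)=\deg(\Gg|_C)+\sum_{i>j}\deg\big((\Ff_i/\Ff_{i-1})|_C\big)\le\deg(\Gg|_C),
\]
which gives \(-K_{\Gg}\cdot C\ge -K_{\Ff}\cdot C>0\). This is item (3).

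\emph{Step (b): ampleness on \(C\).} Every HN graded piece of \(\Gg|_C\) is semistable of positive slope. In characteristic zero, a semistable bundle of positive slope on a curve is ample, and an extension of ample bundles is ample. So \(\Gg|_C\) is ample, which is item (2).

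\emph{Step (c): regularity.} Each \(\Ff_i\) is saturated in \(\Ff\), and hence in \(T_X\), so it is reflexive. A general complete-intersection curve avoids the codimension \(\ge2\) locus where \(\Gg\) fails to be locally free. We also need \(\Gg\) to be a subbundle of \(T_X\) along \(C\); this holds because the singular locus of a saturated subsheaf of the locally free sheaf \(T_X|_{X_{\rm reg}}\) has codimension \(\ge2\), and \(C\) misses it. This gives item (1).

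\emph{Step (d): integrability.} We must show that \(\Gg\) is closed under the Lie bracket and is algebraically integrable. The plan is to invoke \cite[Lemma~4.12, Theorem~4.2]{CP19}. The bracket induces an \(\Oo_X\)-linear map
\[
\wedge^2\Gg\to\Ff/\Gg .
\]
On \(C\), every HN slope of \(\wedge^2\Gg|_C\) is at least \(2\mu_j>\mu_{j+1}\), and \(\mu_{j+1}\) is the maximal slope of \((\Ff/\Gg)|_C\). So the restricted map vanishes, and since \(C\) moves in a covering family, the map vanishes globally. Thus \(\Gg\) is a foliation. Its minimal slope with respect to the movable class \([C]\) is \(\mu_j>0\), so by Campana--P\u{a}un \(\Gg\) is algebraically integrable with rationally connected general leaves. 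One subtlety is that \(X\) is only normal, so we apply their theorem on a resolution or on the regular locus; it is stated for normal projective varieties and movable classes, so this should be direct.

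We expect Step (d) to be the main obstacle, specifically matching the hypotheses of \cite{CP19}. Their positivity is phrased via the minimal slope \(\mu_{\min}\) with respect to the movable class \(H_1\cdots H_{n-1}\), rather than a chosen curve \(C\). The point to check is that the MR-compatibility of the HN filtration identifies the slopes computed on \(C\) with those for the class \(m_1\cdots m_{n-1}H_1\cdots H_{n-1}\). This reduces to the statement \(C\equiv m_1\cdots m_{n-1}H_1\cdots H_{n-1}\), which is immediate.
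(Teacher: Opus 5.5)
Your proposal is correct and follows the paper's proof essentially step for step. You use the same choice $\Gg=\Ff_j$ with $j$ the last index of positive slope, the same Mehta--Ramanathan restriction argument for ampleness of $\Gg|_C$ and for $\deg((\Ff/\Gg)|_C)\le 0$, and the same citation of \cite{CP19} for algebraic integrability. The only differences are that you spell out two points the paper leaves implicit: the O'Neil-tensor argument showing $\Gg$ is closed under the bracket (the paper folds this into its citation of \cite{CP19}), and the codimension-two argument for regularity along $C$ (the paper simply writes ``we may further assume'').
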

\begin{proof}
The proof follows essentially from \cite[Theorem 4.2]{CP19}. For the reader's convenience, we provide a full proof here. Since $C$ is contained in the smooth locus of $X$, possibly replacing $X$ with a resolution, we may assume that $X$ is smooth. Let $\alpha:=[C]$. Let
$$0=\Ff_0\subsetneq\Ff_1\subsetneq\Ff_2\dots\subsetneq\Ff_m=\Ff$$
be the HN filtration of $\Ff$ with respect to $\alpha$. Since $C$ is sufficiently general, we may assume that $C$ is a sufficiently general MR curve for $\Ff$. In particular, the HN filtration of $\Ff$ restricts to the HN filtration of $\Ff|_C$. We may further assume that $\Ff_i$ is regular along $C$ for any $1\leq i\leq m$. Denote by
$$\mathcal{Q}_i:=\Ff_i/\Ff_{i-1}\quad \text{and}\quad \mu_i:=\mu_{\alpha}(\mathcal{Q}_i)=\frac{c_1(\mathcal{Q}_i)\cdot C}{\rk\mathcal{Q}_i}.$$
Then
$$\mu_1>\mu_2>\cdots>\mu_m$$
by the definition of an HN filtration.
Since
$$\deg(\mathcal F|_C)=c_1(\mathcal F)\cdot C>0,$$
$\mu_i>0$ for some $i$, and we may let
$$s:=\max_{1\leq i\leq m}\{i\mid \mu_i>0\}.$$
We define $\Gg:=\Ff_s$. Then
$$\mu_{\alpha}^{\min}(\Gg)=\mu_s>0.$$
By \cite[Theorem 4.2]{CP19}, $\Gg$ is an algebraically integrable foliation. 

Since $C$ is a sufficiently general MR curve for $\Ff$, we have that
$$0=\Ff_0|_C\subset\Ff_1|_C\subset\dots\subset\Ff_m|_C=\Ff|_C$$
is an HN filtration of $\Ff|_C$. In particular,
\begin{equation}\label{equ:hn-gc}
    0=\Ff_0|_C\subset\Ff_1|_C\subset\dots\subset\Ff_s|_C=\Gg|_C
\end{equation}
is an HN filtration of $\Gg|_C$. Thus the slopes of the HN filtration \eqref{equ:hn-gc} are $\mu_1,\dots,\mu_s$, which are all positive. Thus
$$\mu_{\alpha}^{\min}(\Gg|_C)>0.$$
Since \(\Gg|_C\) is a vector bundle on the curve \(C\) and $\mu_{\alpha}^{\min}(\Gg|_C)>0$, all the non-zero quotient bundles of $\Gg|_C$ have positive degree, and so $\Gg|_C$ is an ample vector bundle.

By our construction of $\Gg$, all HN slopes of $(\Ff/\Gg)|_C$ are non-positive. Thus
\begin{equation}
    \deg\left((\Ff/\Gg)|_C\right)\leq 0
\end{equation}
so
$$-K_{\Gg}\cdot C=\deg(\Gg|_C)=\deg(\Ff|_C)-\deg\left((\Ff/\Gg)|_C\right)\geq\deg(\Ff|_C)=-K_{\Ff}\cdot C.$$
This implies (3). The theorem follows.
\end{proof}

\section{The graph-neighborhood relative model}\label{sec:graph-neighborhood}

\begin{defn}\label{defn:graph-neighborhood}
Let \(Y\) be a complex variety, \(\Ee\subset T_Y\) a foliation regular along a smooth complete subvariety \(\Sigma\subset Y\), and assume that \(\Sigma\) is transverse to \(\Ee\) and that \(\Ee|_\Sigma\) is ample.
The \emph{analytic graph-neighborhood} of \(\Sigma\) in \((Y,\Ee)\) is the unique germ of an analytic submanifold \(W^{\rm an}\subset Y\) containing \(\Sigma\), of dimension \(\dim\Sigma+\rk\Ee\), whose fibers over \(\Sigma\) along the transverse direction are analytic open subsets of \(\Ee\)-leaves.
The \emph{algebraic graph-neighborhood} of \(\Sigma\) is the Zariski closure of \(W^{\rm an}\) inside \(Y\); when the normal bundle \(\Ee|_\Sigma\) is ample, Hartshorne's theorem on formal functions implies that this Zariski closure has the same dimension as \(W^{\rm an}\) (see~\cite[Facts~16--19]{KST07}).
\end{defn}

In this section we use the data \((X,\Gg,C)\) produced by Theorem~\ref{thm:curve-level-positive-hn} to build a smooth projective relative model
\[
\rho\colon W\to B,\qquad \phi\colon W\to X,\qquad \sigma\colon B\to W,
\]
in which \(B\) is a smooth projective curve of positive genus with a finite morphism \(\nu\colon B\to C\), the morphism \(\rho\) is projective and flat of relative dimension \(s=\rk\Gg\) and is smooth near \(\sigma(B)\), and there is a canonical isomorphism
\[
T_{W/B}\big|_{\sigma(B)}\;\simeq\;(\phi\circ\sigma)^*\Gg.
\]
The construction is due to Bogomolov--McQuillan~\cite{BM16} and to KST~\cite{KST07}.
We split it into a sequence of lemmas, each making explicit the hypotheses used.

\begin{setup}\label{setup:graph-neighborhood-datum}
Let \(\Gg\subset T_X\) be a foliation of rank~\(s\), and let \(C\subset X\) be a complete curve contained in the smooth locus of~\(X\), with \(\Gg\) regular along~\(C\) and \(\Gg|_C\) ample.
Choose a finite morphism \(\nu\colon B\to C\) from a smooth projective curve of positive genus, and set \(e=\deg\nu\).
If \(C\) is additionally smooth, we call this a \emph{smooth} instance of the set-up.
\end{setup}

\begin{nota}\label{nota:construction-notation}
In the setting of Set-up~\ref{setup:graph-neighborhood-datum}, the BM/KST graph-neighborhood construction proceeds through three stages; we fix the following notation for the remainder of Sections~\ref{sec:graph-neighborhood}--\ref{sec:spreading}.
\begin{enumerate}[(I)]
\item \emph{Product trick.}
Apply the KST product trick~\cite[\S 4.1]{KST07} to \((X,C,\Gg)\): set \(Y=X\times B\) with projections \(p_1\colon Y\to X\) and \(p_2\colon Y\to B\), let \(\Gg_Y=p_1^*\Gg\subset T_{Y/B}\) be the pulled-back foliation, and let \(\sigma_0\colon B\to Y\) denote the graph of~\(\nu\).
\item \emph{Graph-neighborhood algebraization and source extraction.}
Apply the KST graph-neighborhood construction~\cite[\S 4, Facts~16--19]{KST07} to \(\sigma_0(B)\) inside \((Y,\Gg_Y)\): let \(\overline{W}\) denote the projective Zariski closure over \(B\) of the analytic graph-neighborhood; it has dimension \(s+1\) and contains \(\sigma_0(B)\) (Lemma~\ref{lem:kst-transverse-graph-algebraization}).
Let \(\overline{W}_0\subset\overline{W}\) be the irreducible component containing \(\sigma_0(B)\).
\item \emph{Desingularization.}
After normalization and projective desingularization of \(\overline{W}_0\), write \(W\) for the resulting smooth projective model, \(\rho\colon W\to B\) and \(\phi\colon W\to X\) for the structural morphisms, and \(\sigma\colon B\to W\) for the lifted section.
\end{enumerate}
The existence and properties of these objects are established in Lemmas~\ref{lem:kst-product-trick-transfer}--\ref{lem:graph-neighborhood-relative-model} below.
Some intermediate results (Lemmas~\ref{lem:kst-bm-graph-source-extraction} and~\ref{lem:flattened-graph-neighborhood-component}) work with the normalization of \(\overline{W}_0\) alone, without desingularization; in those statements, \(W\) denotes the normalization and may not be smooth.
\end{nota}

\subsection{The KST product trick}

\begin{lem}\label{lem:kst-product-trick-transfer}
Adopt the smooth instance of Set-up~\ref{setup:graph-neighborhood-datum}, with the notation of Notation~\ref{nota:construction-notation}(I).
Then \(\sigma_0(B)\) is smooth and transverse to \(\Gg_Y\), the restriction \(\Gg_Y|_{\sigma_0(B)}\simeq\nu^*(\Gg|_C)\) is ample, and the projection \(p_1\) maps every irreducible curve contained in a \(\Gg_Y\)-leaf to an irreducible curve tangent to \(\Gg\) on \(X\).
For every Cartier divisor \(H\) on \(X\) and every irreducible curve \(R\subset Y\), one has \((p_1^*H)\cdot R=H\cdot(p_1)_*R\) by the projection formula; in particular
\[
(p_1^*H)\cdot\sigma_0(B) \;=\; e\,H\cdot C,\qquad
\deg\Gg_Y|_{\sigma_0(B)} \;=\; e\,\deg\Gg|_C.
\]
\end{lem}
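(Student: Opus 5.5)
The plan is to verify each assertion directly from the definitions of the product \(Y=X\times B\), the pulled-back foliation \(\Gg_Y=p_1^*\Gg\subset T_{Y/B}\subset T_Y\), and the graph \(\sigma_0=(\nu,\mathrm{id}_B)\colon B\to Y\). None of the steps is deep; the only point that needs care is transversality, and that is immediate from the product structure.

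First, smoothness. Since \(\sigma_0\) is a section of the smooth projection \(p_2\colon Y\to B\), with \(p_2\circ\sigma_0=\mathrm{id}_B\), it is a closed immersion. Hence \(\sigma_0(B)\simeq B\) is a smooth projective curve.

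Next, transversality. Because \(C\) lies in the smooth locus of \(X\) and \(\Gg\) is regular along \(C\), the sheaf \(\Gg_Y\) is a subbundle of \(T_Y\) along \(\sigma_0(B)\), contained in \(T_{Y/B}=p_1^*T_X\). The tangent line \(d\sigma_0(T_B)\) projects isomorphically onto \(T_B\) under \(dp_2\). On the other hand, \(\Gg_Y\subset\ker dp_2\). Therefore
\[
T_{\sigma_0(B)}\cap\Gg_Y=0
\]
at every point, which is exactly transversality.

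Third, the restriction. We compute
\[
\Gg_Y|_{\sigma_0(B)}=\sigma_0^*p_1^*\Gg=(p_1\circ\sigma_0)^*\Gg=\nu^*\Gg=\nu^*(\Gg|_C).
\]
The pullback of an ample bundle under the finite morphism \(\nu\) is ample, so \(\Gg_Y|_{\sigma_0(B)}\) is ample.

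Fourth, the behaviour of leaves. The leaves of \(\Gg_Y\) are, locally, of the form \(L\times\{b\}\), where \(L\) is a \(\Gg\)-leaf, because \(\Gg_Y\) is the pullback of \(\Gg\) inside the vertical directions \(T_{Y/B}\). Take an irreducible curve \(R\) in such a leaf. Then \(R\subset X\times\{b\}\), so \(p_1|_R\) is an isomorphism onto its image. Its tangent directions lie in \(dp_1(\Gg_Y)=\Gg\), so \(p_1(R)\) is an irreducible curve tangent to \(\Gg\).

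Finally, the numerics. The projection formula gives \((p_1^*H)\cdot R=H\cdot(p_1)_*R\). For \(R=\sigma_0(B)\) we have \((p_1)_*\sigma_0(B)=\nu_*[B]=e[C]\), which yields \((p_1^*H)\cdot\sigma_0(B)=e\,H\cdot C\). Likewise,
\[
\deg\nu^*(\Gg|_C)=e\deg(\Gg|_C),
\]
which is the second formula.
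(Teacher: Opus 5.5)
Your proposal is correct and follows essentially the same argument as the paper: transversality from \(\Gg_Y\subset T_{Y/B}=\ker dp_2\) while \(T_{\sigma_0(B)}\) maps isomorphically onto \(T_B\); ampleness because finite pullback preserves it; leaves lying in fibers of \(p_2\); and the projection formula with \((p_1)_*\sigma_0(B)=\nu_*[B]=e[C]\). The only difference is that you check directly some properties the paper attributes to the KST product trick, such as smoothness of the graph, which you get from the section being a closed immersion.
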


\begin{proof}
This is the KST product trick~\cite[\S 4.1]{KST07}: after replacing \(C\) by a finite smooth nonconstant cover of positive genus, replacing \(X\) by \(Y=X\times B\), taking the graph of \(\nu\), and pulling back \(\Gg\) from \(X\), one obtains a foliation \(\Gg_Y\subset T_{Y/B}\) for which the graph \(\sigma_0(B)\) is smooth and transverse to \(\Gg_Y\), the restriction \(\Gg_Y|_{\sigma_0(B)}\) is the finite pullback of \(\Gg|_C\), and leaves of \(\Gg_Y\) are isomorphic to leaves of \(\Gg\).

Finite pullback preserves ampleness of vector bundles on curves, so \(\Gg_Y|_{\sigma_0(B)}\) is ample.
Transversality follows because \(\Gg_Y\subset T_{Y/B}\), while \(T_{\sigma_0(B)}\) maps isomorphically to \(T_B\).
If a curve maps into a \(\Gg_Y\)-leaf inside a fiber of \(p_2\), then its image under \(p_1\) lies in the corresponding \(\Gg\)-leaf on \(X\), and is therefore tangent to \(\Gg\).
For a Cartier divisor \(H\) on \(X\), the projection formula gives \((p_1^*H)\cdot R=H\cdot(p_1)_*R\); in particular \((p_1^*H)\cdot\sigma_0(B)=H\cdot\nu_*[B]=e\,H\cdot C\).
The identity \(\deg\Gg_Y|_{\sigma_0(B)}=e\,\deg\Gg|_C\) follows in the same way.
\end{proof}

\subsection{Algebraization of the analytic graph-neighborhood}

\begin{lem}\label{lem:kst-transverse-graph-algebraization}
Let \(Y\) be a normal complex projective variety equipped with a morphism \(\pi\colon Y\to B\) to a smooth projective curve.
Let \(\Ee\subset T_{Y/B}\) be a rank \(s\) foliation, regular along a smooth section \(\sigma\colon B\to Y\), such that \(\sigma(B)\) is transverse to \(\Ee\) and \(\Ee|_{\sigma(B)}\) is ample.
Assume that, near the section, the leaves of \(\Ee\) are contained in the fibers of \(\pi\).
Then the KST graph-neighborhood construction of~\cite[\S 4, Facts~16--19]{KST07} produces an irreducible projective Zariski closure \(\overline{W}\) over \(B\), of dimension \(s+1\), containing the section.
Its normalization lifts the section, is smooth along the lifted section, and has relative tangent bundle along the section identified with \(\Ee|_{\sigma(B)}\).
Moreover, the fiber component containing the local analytic leaf is the algebraic leaf through the corresponding section point.
\end{lem}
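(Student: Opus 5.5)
I would prove Lemma~\ref{lem:kst-transverse-graph-algebraization} by carrying out the KST graph-neighborhood construction directly in the relative setting, and verifying the four conclusions in order: existence of the analytic germ, algebraicity with the correct dimension, smoothness of the normalization along the lifted section, and identification of fibers with algebraic leaves.

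\emph{Analytic germ.} Since \(\Ee\) is regular along \(\sigma(B)\), near each point of the section the Frobenius theorem gives holomorphic charts in which \(\Ee\) is \(\mathrm{span}(\partial_{z_1},\dots,\partial_{z_s})\). Because \(\sigma(B)\) is transverse to \(\Ee\) and \(\Ee\subset T_{Y/B}\), the union of the small leaf plaques through the points of \(\sigma(B)\) is a germ of a complex submanifold \(W^{\rm an}\) of dimension \(s+1\) containing \(\sigma(B)\). On overlaps the plaques agree by uniqueness of integral manifolds, so these local pieces glue. The hypothesis that leaves near the section lie in fibers of \(\pi\) then makes \(W^{\rm an}\to B\) a smooth submersion whose fiber over \(b\) is a germ of the \(\Ee\)-leaf through \(\sigma(b)\). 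By construction \(N_{\sigma(B)/W^{\rm an}}\simeq\Ee|_{\sigma(B)}\), which is ample.

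\emph{Algebraization.} This is the step I expect to be the main obstacle. The plan is to invoke \cite[Facts~16--19]{KST07}, which rest on the Hartshorne/Hironaka--Matsumura result that a formal germ along a subvariety with ample normal bundle is \(G2\) (or \(G3\)), so that its field of formal meromorphic functions has transcendence degree at most \(\dim W^{\rm an}\). The algebraic functions on \(Y\) restricted to the formal completion of \(W^{\rm an}\) along \(\sigma(B)\) therefore generate a field of transcendence degree \(\le s+1\). Hence the Zariski closure \(\overline W\) of \(W^{\rm an}\) has dimension \(\le s+1\), and since it contains \(W^{\rm an}\) its dimension is exactly \(s+1\). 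Irreducibility follows because \(W^{\rm an}\) is a connected manifold and hence lies in a single irreducible component. \(\overline W\) is projective over \(B\) because it is closed in \(Y\).

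\emph{Normalization and leaves.} Since \(W^{\rm an}\) is a smooth \((s+1)\)-dimensional germ contained in the \((s+1)\)-dimensional variety \(\overline W\), the variety \(\overline W\) is analytically unibranch and smooth at points of \(\sigma(B)\), where it coincides with \(W^{\rm an}\). The normalization is therefore an isomorphism near \(\sigma(B)\); the section lifts and the normalization is smooth along it. The relative tangent bundle along the lifted section is \(T_{W^{\rm an}/B}|_{\sigma(B)}=\Ee|_{\sigma(B)}\). For the last claim, fix \(b\) and let \(F\) be the component of \(\overline W_b\) through \(\sigma(b)\). This fiber contains the \(s\)-dimensional leaf germ. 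Conversely, \(\overline W\) is tangent to \(\Ee\) on the dense open set \(W^{\rm an}\), and tangency is a closed algebraic condition, so \(\overline W\) is \(\Ee\)-invariant and \(F\) is \(\Ee\)-tangent. It follows that \(F\) is the Zariski closure of the leaf through \(\sigma(b)\), i.e.\ the algebraic leaf. The delicate point here is dimension: one needs \(\dim F=s\) rather than larger, which follows from upper semicontinuity together with smoothness of \(\overline W\to B\) at \(\sigma(b)\).
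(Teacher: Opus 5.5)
Your first two steps unpack \cite[Facts~16--17]{KST07} in the same way the paper does; the paper's proof simply cites Facts~16--17, Corollary~18 and Remark~19. However, your normalization step contains a false inference. From ``\(W^{\rm an}\) is a smooth \((s+1)\)-dimensional germ contained in the \((s+1)\)-dimensional variety \(\overline{W}\)'' you cannot conclude that \(\overline{W}\) is unibranch or smooth along \(\sigma(B)\). A full-dimensional smooth germ is only one analytic branch, and other branches of \(\overline{W}\) may pass through the same point; think of one branch of a nodal curve.

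In this setting that really happens. It occurs when the algebraic leaf through \(\sigma(b')\) returns near \(\sigma(b')\) in a second sheet that converges to \(\sigma(b)\) as \(b'\to b\). For example, let \(X=(\mathbb P^1\times\mathbb P^1)/(\mathbb Z/2)\) for the involution \((x,y)\mapsto(-x,-y)\). Let \(\Gg\) be the foliation by fibers of the map \(X\to\mathbb P^1\) induced by \((x,y)\mapsto y^2\). Then:
\begin{enumerate}[(a)]
\item \(\Gg\) is regular on \(X_{\rm reg}\);
\item \(\Gg\) pulls back to \(\Oo(2,0)\) where the involution acts freely, so \(\Gg|_C\) is ample for a general complete intersection \(C\);
\item the map has a double fiber over \(y^2=0\).
\end{enumerate}
Suppose \(\nu(b_0)\) lies on that double fiber, away from the \(A_1\) points. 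In local coordinates \((x,y,b)\) near \(\sigma_0(b_0)\in Y=X\times B\) one finds \(\overline{W}=\{y^2=y(b)^2\}=\{y=y(b)\}\cup\{y=-y(b)\}\), whereas \(W^{\rm an}=\{y=y(b)\}\). So \(\overline{W}\) is not even normal at \(\sigma_0(b_0)\), and your claim that the normalization is an isomorphism near \(\sigma(B)\) is false.

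This is exactly why the statement asserts smoothness only for the normalization. The missing idea is the universal property of normalization, as used in \cite[Remark~19]{KST07}. \(W^{\rm an}\) is smooth, hence normal, and it is Zariski dense in \(\overline{W}\). So it meets the non-normal locus in a nowhere dense analytic subset, and its inclusion lifts to the analytic normalization \(\overline{W}^{\nu}\). The lift is injective between normal spaces of the same dimension, hence an open embedding (Remmert's open mapping theorem together with normality). This gives the lift of the section, smoothness of \(\overline{W}^{\nu}\) along it, and \(T_{\overline{W}^{\nu}/B}|_{\sigma(B)}=T_{W^{\rm an}/B}|_{\sigma(B)}=\Ee|_{\sigma(B)}\).

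Your last step has a related problem. There \(\dim F=s\) should come from the fact that every fiber component of the irreducible \((s+1)\)-dimensional \(\overline{W}\) dominating \(B\) has dimension \(s\) (Lemma~\ref{lem:integral-curve-base-flatness}). It should not rest on smoothness of \(\overline{W}\to B\) at \(\sigma(b)\), which can fail at such special points. With these two repairs, the rest of your argument goes through.
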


\begin{proof}
By~\cite[Fact~16]{KST07}, applied in the transverse setup, there is an irreducible analytic submanifold \(W^{\rm an}\subset Y\) containing the section, smooth over \(B\), whose fibers are analytic open subsets of leaves and whose normal bundle along the section is \(\Ee|_{\sigma(B)}\).
By~\cite[Fact~17]{KST07}, Hartshorne's theorem on formal functions and the ampleness of this normal bundle imply that the Zariski closure has the same dimension as \(W^{\rm an}\), namely \(s+1\).
By~\cite[Corollary~18]{KST07}, the fiber component containing the analytic local leaf is the algebraic leaf.
By~\cite[Remark~19]{KST07}, normalization lifts the analytic graph-neighborhood, is smooth along the lifted section, and preserves the normal-bundle identification.
Since \(Y\) is projective over \(B\), the Zariski closure is projective over \(B\).
\end{proof}

\subsection{The KST/BM graph-neighborhood source extraction}

\begin{lem}\label{lem:kst-bm-graph-source-extraction}
Adopt the smooth instance of Set-up~\ref{setup:graph-neighborhood-datum}, with the notation of Notation~\ref{nota:construction-notation}(I)--(II).
After normalization \(W\to\overline{W}_0\), there are morphisms \(\rho\colon W\to B\) and \(\phi\colon W\to X\), and a dense open \(B^\circ\subset B\), with the following properties.
\begin{enumerate}[(i)]
\item \(\overline{W}_0\) is projective over \(B\), dominates \(B\), and has dimension \(s+1\).
\item The section lifts to \(\sigma\colon B\to W\), \(W\) is smooth near \(\sigma(B)\), and \(T_{W/B}|_{\sigma(B)}\simeq(\phi\circ\sigma)^*\Gg\).
\item For \(b\in B^\circ\), the unique fiber component through \(\sigma(b)\) maps into the \(\Gg\)-leaf through the corresponding point of \(C\).
\item Degrees against Cartier divisors pulled back from \(X\) are computed by the projection formula; section degrees are multiplied by \(\deg\nu\).
\end{enumerate}
\end{lem}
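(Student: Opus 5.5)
The plan is to assemble the lemma directly from Lemma~\ref{lem:kst-product-trick-transfer} and Lemma~\ref{lem:kst-transverse-graph-algebraization}, applied to \(Y=X\times B\), \(\pi=p_2\), \(\Ee=\Gg_Y=p_1^*\Gg\) and the graph section \(\sigma_0\).

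First I check the hypotheses of Lemma~\ref{lem:kst-transverse-graph-algebraization}.
\begin{enumerate}[(a)]
\item Smoothness, transversality and ampleness of \(\Gg_Y|_{\sigma_0(B)}\simeq\nu^*(\Gg|_C)\) are exactly Lemma~\ref{lem:kst-product-trick-transfer}.
\item Regularity of \(\Gg_Y\) along \(\sigma_0(B)\) follows since \(\Gg\) is regular along \(C\) and \(p_1\circ\sigma_0=\nu\).
\item \(\Gg_Y\subset T_{Y/B}\) holds by construction, so leaves near the section lie in fibers of \(p_2\).
\end{enumerate}
One caveat: \(Y\) need only be normal, and \(\sigma_0(B)\subset X_{\rm reg}\times B\) lies in the smooth locus, which is all the construction uses.

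Lemma~\ref{lem:kst-transverse-graph-algebraization} then gives an irreducible projective \(\overline W\) over \(B\) of dimension \(s+1\) containing \(\sigma_0(B)\). Since it is irreducible, \(\overline W_0=\overline W\). It dominates \(B\) because it contains a section, which gives (i).

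Next let \(W\) be the normalization and compose with \(\overline W\subset Y\) to define \(\rho=p_2\circ(\cdot)\) and \(\phi=p_1\circ(\cdot)\). By the same lemma the section lifts to \(\sigma\), \(W\) is smooth along \(\sigma(B)\), and \(T_{W/B}|_{\sigma(B)}\simeq\Gg_Y|_{\sigma_0(B)}\). This last bundle equals \(\sigma_0^*p_1^*\Gg=(\phi\circ\sigma)^*\Gg\), since \(p_1\circ\sigma_0=\phi\circ\sigma=\nu\). That is (ii).

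For (iii), I take \(B^\circ\) to be the open set over which \(\rho\) is smooth near \(\sigma\). On \(B^\circ\) there is a unique fiber component through \(\sigma(b)\), and by the final clause of Lemma~\ref{lem:kst-transverse-graph-algebraization} it is the algebraic \(\Gg_Y\)-leaf through \(\sigma_0(b)\). Projecting by \(p_1\) and using Lemma~\ref{lem:kst-product-trick-transfer}, it lands in the \(\Gg\)-leaf through \(\nu(b)\in C\).

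For (iv), normalization is finite and birational, so the projection formula for \(\phi^*H=(\text{norm})^*p_1^*H\) reduces to that on \(Y\) in Lemma~\ref{lem:kst-product-trick-transfer}. This gives \(\phi^*H\cdot\sigma(B)=e\,H\cdot C\).

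The main obstacle is (iii), specifically the uniqueness of the fiber component through \(\sigma(b)\) and its identification with the leaf. Uniqueness needs \(\rho\) to be smooth at \(\sigma(b)\) for general \(b\). This holds because \(W\) is smooth along \(\sigma(B)\) and \(T_{W/B}|_{\sigma(B)}\) has rank \(s\), which equals the relative dimension, so \(d\rho\) is surjective there. I would verify that surjectivity first, and then invoke KST's Corollary~18 for the leaf identification.
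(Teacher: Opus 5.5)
Your proposal is correct and follows the paper's proof. The paper likewise just combines Lemma~\ref{lem:kst-product-trick-transfer} with Lemma~\ref{lem:kst-transverse-graph-algebraization} and takes \(\phi\) to be \(p_1\) composed with the map to \(Y\); your check of the hypotheses (regularity along the graph, \(\Gg_Y\subset T_{Y/B}\), \(\overline W_0=\overline W\)) is simply more explicit.

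One small tightening: get smoothness of \(\rho\) at \(\sigma(b)\) from \(\rho\circ\sigma=\mathrm{id}_B\). This gives \(d\rho\circ d\sigma=\mathrm{id}\), so \(d\rho\) is surjective at the smooth point \(\sigma(b)\). Alternatively, it follows directly from \(W^{\rm an}\) being smooth over \(B\). Either route is cleaner than the rank count on \(T_{W/B}|_{\sigma(B)}\), since that restricted sheaf's fibres need not coincide with \(\ker d\rho\) a priori.
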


\begin{proof}
Lemma~\ref{lem:kst-product-trick-transfer} applies~\cite[\S 4.1]{KST07} to replace \((X,C,\Gg)\) by \((Y,\sigma(B),\Gg_Y)\), where the section is transverse to \(\Gg_Y\), \(\Gg_Y|_{\sigma(B)}\simeq\nu^*(\Gg|_C)\) is ample, and projection \(p_1\colon Y\to X\) sends \(\Gg_Y\)-leafwise curves to \(\Gg\)-tangent curves while preserving intersection numbers and scaling section degrees by \(\deg\nu\).

Lemma~\ref{lem:kst-transverse-graph-algebraization} then applies~\cite[Fact~16, Fact~17, Corollary~18, Remark~19]{KST07} to the transverse setup.
It gives the irreducible projective closure of dimension \(s+1\), the section lift after normalization, smoothness along the lifted section, the relative tangent identification, and the assertion that the fiber component through the analytic local leaf is the algebraic leaf.

Combining these two lemmas gives the asserted properties of the source curve.
The morphism \(\phi\colon W\to X\) is \(p_1\) composed with the graph-neighborhood map to \(Y\), and the projection formula gives the degree assertion.
\end{proof}

\begin{rem}
The package extracted in Lemma~\ref{lem:kst-bm-graph-source-extraction} is the same graph-neighborhood package as in~\cite[Fact~2.1.1, Better Fact~2.2.1]{BM16}: the graph-neighborhood algebraizes with dimension \(s+1\), carries a section over the base curve, has fibers mapping to invariant subvarieties through the corresponding points, and the pulled-back foliation maps to the relative tangent bundle near the section.
\end{rem}

\subsection{Flatness over a smooth curve}

The following lemma is standard; we state it for ease of later reference.

\begin{lem}\label{lem:integral-curve-base-flatness}
Let \(k\) be an algebraically closed field, let \(B\) be a smooth irreducible curve over \(k\), and let \(W\) be an integral \(k\)-variety of dimension \(s+1\).
If \(\rho\colon W\to B\) is a dominant morphism, then \(\rho\) is flat, and every irreducible component of every fiber of \(\rho\) has dimension \(s\).
\end{lem}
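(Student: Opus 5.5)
The plan is to deduce flatness from the local structure of \(B\) and then obtain the fiber dimension from flatness together with a count of dimensions.

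\emph{Flatness.} Since \(B\) is a smooth irreducible curve, each local ring \(\Oo_{B,b}\) at a closed point is a discrete valuation ring. Over a DVR, a module is flat exactly when it is torsion-free, so it suffices to check that for every \(w\in W\) with \(\rho(w)=b\), the \(\Oo_{B,b}\)-module \(\Oo_{W,w}\) has no torsion. Let \(t\) be a uniformizer at \(b\). We then show that multiplication by \(\rho^*t\) on \(\Oo_{W,w}\) is injective.
\begin{itemize}
\item \(W\) is integral, so \(\Oo_{W,w}\) is a domain.
\item \(\rho\) is dominant, so the generic point of \(W\) maps to the generic point of \(B\). Hence \(\rho^\sharp\colon \Oo_{B,b}\to \Oo_{W,w}\) is injective, and \(\rho^*t\neq 0\).
\end{itemize}
A nonzero element of a domain is a nonzerodivisor, so \(\Oo_{W,w}\) is torsion-free and therefore flat. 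This is the standard criterion of \cite[III.9.7]{Hartshorne}-type (flat over a Dedekind base iff every associated point maps to the generic point). Since \(W\) is integral, its only associated point is the generic point, and that point does map to the generic point of \(B\).

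\emph{Fiber dimension.} Fix a closed point \(b\in B\) and a closed point \(w\in W_b\). The fiber \(W_b\) is cut out by the single equation \(\rho^*t\), which is nonzero but not a unit at \(w\). By Krull's principal ideal theorem, every irreducible component of \(W_b\) through \(w\) has dimension at least \(\dim W-1=s\).

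For the upper bound we use dominance. A component of \(W_b\) is a closed subset contained in the proper closed subset \(W_b\subsetneq W\), and \(W\) is irreducible of finite type over \(k\), so that component has dimension at most \(s\). Equivalently, flatness gives \(\dim \Oo_{W_b,w}=\dim\Oo_{W,w}-\dim\Oo_{B,b}\).

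It follows that every component of every closed fiber has dimension exactly \(s\). The generic fiber is handled by the same argument: it has dimension \(\dim W-\dim B=s\) because it is irreducible and dense in \(W\). An empty fiber would contradict nothing, but the statement is vacuous there. Note also that if \(\rho\) is proper it is surjective, although this is not needed for the claim.

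\emph{Main obstacle.} The only delicate point is to make sure dimension theory applies in this generality, namely to varieties of finite type over a field without any smoothness assumption. This is harmless, because for integral schemes of finite type over a field \(\dim\Oo_{W,w}+\dim\overline{\{w\}}=\dim W\) holds. Everything else is formal.
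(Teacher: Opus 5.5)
Your proposal is correct and follows essentially the same route as the paper: torsion-freeness of \(\Oo_{W,w}\) over the DVR \(\Oo_{B,b}\) (from integrality of \(W\) and dominance of \(\rho\)) gives flatness, and Krull's principal ideal theorem applied to the pulled-back uniformizer, together with \(W_b\subsetneq W\), shows that every component of a closed fiber has dimension exactly \(s\). Your extra remark on the generic fiber is fine, though the correct justification there is the count \(\dim W_\eta=\operatorname{trdeg}_{k(B)}k(W)=s\), not the density of \(W_\eta\) in \(W\).
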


\begin{proof}
At a closed point \(b\in B\), the local ring \(\Oo_{B,b}\) is a discrete valuation ring (DVR), and \(\Oo_{W,w}\) is torsion-free over \(\Oo_{B,b}\) for every \(w\in\rho^{-1}(b)\) since \(\rho\) is dominant and \(W\) is integral.
Torsion-free modules over a DVR are flat~\cite[Tag~0539]{Stacks}, and hence \(\rho\) is flat at every point of \(W\).
Each closed fiber is cut out near the generic point of any of its components by a single nonzero non-unit (a uniformizer of \(\Oo_{B,b}\) pulled back to \(W\)), so by Krull's principal ideal theorem each component has codimension one in \(W\); since \(W\) is a variety over a field, this means dimension \(s\)~\cite[Tag~02IJ]{Stacks}.
\end{proof}

\subsection{Non-contraction along the section}

\begin{lem}\label{lem:section-branch-noncontraction}
Let \(W\) and \(X\) be complex varieties, let \(\phi\colon W\to X\) be a morphism, and let \(p\in W\) be a smooth point.
Let \(V\subset T_{W,p}\) be a linear subspace such that \(d\phi_p|_V\) is injective.
If \(R\subset W\) is an irreducible curve through \(p\) whose Zariski tangent space \(T_{R,p}\) contains a nonzero vector lying in \(V\), then \(\phi(R)\) is not a point.
\end{lem}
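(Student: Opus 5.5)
We argue by contradiction: suppose \(\phi(R)\) is a single point \(x\in X\). Pass to the normalization \(\eta\colon\widetilde R\to R\) and choose a point \(\tilde p\in\widetilde R\) over \(p\) such that the image of \(d\eta_{\tilde p}\colon T_{\widetilde R,\tilde p}\to T_{W,p}\) is a nonzero vector lying in \(V\). Then \(\phi\circ\eta\colon\widetilde R\to X\) is constant, so its differential at \(\tilde p\) vanishes. By the chain rule,
\[
d\phi_p\circ d\eta_{\tilde p}=0 .
\]
If \(d\eta_{\tilde p}\) has nonzero image \(v\in V\), then \(d\phi_p(v)=0\) with \(v\neq0\), contradicting the injectivity of \(d\phi_p|_V\).

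The point requiring care is whether the hypothesis "\(T_{R,p}\) contains a nonzero vector of \(V\)" actually yields a branch whose differential lands in \(V\). If \(R\) is smooth at \(p\) this is immediate: \(T_{R,p}\) is one-dimensional, equal to the image of \(d\eta\). If \(R\) is singular at \(p\), its Zariski tangent space may be larger than the span of the branch tangent lines, and some branches may be cuspidal with \(d\eta_{\tilde p}=0\).

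To handle this without branches, I would argue directly at the level of ideals. If \(\phi|_R\) is constant with value \(x\), then \(\phi^*\mathfrak m_x\cdot\Oo_R\subset\mathfrak m_p\) annihilates \(\Oo_R\) modulo \(\mathfrak m_p\). More precisely, \(\phi|_R\) factors through the reduced point \(\{x\}\), so \(\phi^*\mathfrak m_x\) maps to zero in \(\Oo_R\), since \(R\) is reduced. Dually, the composite
\[
T_{R,p}\hookrightarrow T_{W,p}\xrightarrow{d\phi_p}T_{X,x}
\]
is the differential of \(R\to\{x\}\hookrightarrow X\), which factors through \(T_{\{x\},x}=0\). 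Hence \(d\phi_p\) kills all of \(T_{R,p}\), and in particular it kills the given nonzero vector of \(T_{R,p}\cap V\), contradicting injectivity. This version is cleaner and is the one I would write: functoriality of Zariski tangent spaces for the factorization \(R\to\Spec k(x)\to X\).

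The only step needing attention is this functoriality. It uses reducedness of \(R\) (true because \(R\) is an irreducible curve, hence integral) to ensure the scheme-theoretic image of \(R\) is the reduced point. Smoothness of \(p\in W\) is only needed so that \(T_{W,p}\) and \(V\) make sense as intended.
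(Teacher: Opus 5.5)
Your final argument is correct and is essentially the paper's proof. Assuming \(\phi(R)\) is a point, the differential of \(\phi|_R\) kills all of \(T_{R,p}\), so \(d\phi_p\) kills the given nonzero vector of \(T_{R,p}\cap V\), contradicting injectivity. Your extra justification, that reducedness of \(R\) forces \(\phi|_R\) to factor through \(\Spec k(x)\) whose tangent space is zero, supplies the one step the paper states without comment; you were also right to drop the normalization/branch version, which can fail at singular points of \(R\).
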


\begin{proof}
If \(\phi(R)\) were a point, then \(\phi|_R\) would be constant, so its differential would annihilate \(T_{R,p}\).
Choosing a nonzero vector \(v\in T_{R,p}\cap V\), we get \(d\phi_p(v)=0\), contradicting injectivity of \(d\phi_p|_V\).
Hence \(\phi(R)\) is nonconstant.
\end{proof}

\begin{lem}\label{lem:section-curve-pushforward}
Let \(B\) be a smooth complex curve, let \(W\) be a complex projective variety, and let \(\rho\colon W\to B\) have a section \(\sigma\colon B\to W\).
Let \(X\) be a normal complex variety, let \(\phi\colon W\to X\) be a morphism, and let \(\Gg\subset T_X\) be a rank \(s\) foliation regular along \(\phi(\sigma(B))\).
Assume that \(\rho\) is smooth near \(\sigma(B)\), that \(\phi\circ\sigma\) lies in the regular locus of \(\Gg\), that \(d\phi\) identifies \(T_{W/B}|_{\sigma(B)}\) with \((\phi\circ\sigma)^*\Gg\), and that for a general \(b\in B\), the fiber component through \(\sigma(b)\) maps into the \(\Gg\)-leaf through \(\phi(\sigma(b))\).
Then every irreducible curve \(R\) in that general fiber component and meeting \(\sigma(b)\) has nonconstant image tangent to \(\Gg\).
If \(R\) is rational, then \(\phi(R)\) is rational, and for every Cartier divisor \(H\) on \(X\),
\[
(\phi^*H)\cdot R\;=\;H\cdot\phi_*R.
\]
\end{lem}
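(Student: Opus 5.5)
The plan is to prove the three assertions of Lemma~\ref{lem:section-curve-pushforward} one at a time: nonconstancy, tangency to \(\Gg\), and the rationality and degree statements.

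\emph{Nonconstancy.} Fix a general \(b\in B\), and let \(F_b\) be the fiber component through \(\sigma(b)\). Since \(\rho\) is smooth near \(\sigma(B)\), the point \(p=\sigma(b)\) is a smooth point of \(W\), and \(F_b\) is smooth at \(p\) with \(T_{F_b,p}=T_{W/B,p}\). Take \(V:=T_{W/B,p}\subset T_{W,p}\). The hypothesis that \(d\phi\) identifies \(T_{W/B}|_{\sigma(B)}\) with \((\phi\circ\sigma)^*\Gg\) means that \(d\phi_p|_V\) is injective, with image \(\Gg_{\phi(p)}\subset T_{X,\phi(p)}\). This uses that \(\Gg\) is regular at \(\phi(p)\), so that \(\Gg_{\phi(p)}\to T_{X,\phi(p)}\) is injective.

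Now let \(R\subset F_b\) be an irreducible curve through \(p\). Then \(T_{R,p}\subset T_{F_b,p}=V\). The space \(T_{R,p}\) is nonzero, since a curve has Zariski tangent space of dimension at least one at each point. Hence \(T_{R,p}\) contains a nonzero vector of \(V\), and Lemma~\ref{lem:section-branch-noncontraction} shows that \(\phi(R)\) is not a point. This is the step where care is needed: the argument needs \(R\subset F_b\) with \(F_b\) smooth at \(p\). Otherwise \(T_{R,p}\) need not lie in \(V\), so I will explicitly use smoothness of \(\rho\) near \(\sigma(B)\) to obtain \(T_{F_b,p}=\ker d\rho_p=V\).

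\emph{Tangency.} By hypothesis \(\phi(F_b)\) lies in the \(\Gg\)-leaf \(L\) through \(\phi(\sigma(b))\), so \(\phi(R)\subset L\). Here the leaf is understood as the algebraic leaf, i.e.\ a \(\Gg\)-invariant subvariety of dimension \(s\) whose tangent space at general points is \(\Gg\). An irreducible curve contained in an invariant subvariety, and not contained in its singular or \(\Gg\)-singular locus, is tangent to \(\Gg\). The curve \(\phi(R)\) passes through \(\phi(p)\), which lies in the regular locus of \(\Gg\) and at which \(L\) is smooth with \(T_L=\Gg\). Therefore the general point of \(\phi(R)\) avoids the bad locus, and \(\phi(R)\) is tangent to \(\Gg\). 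Since \(b\) is general, this applies to the general fiber component, as required.

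\emph{Rationality and degree formula.} If \(R\) is rational, then \(\phi(R)\) is the image of a rational curve under a nonconstant map, so it is a rational curve by L\"uroth's theorem. The identity \((\phi^*H)\cdot R=H\cdot\phi_*R\) is the projection formula for the proper morphism \(\phi|_R\). Here \(\phi_*R=\deg(R\to\phi(R))\,[\phi(R)]\) is nonzero, because \(\phi(R)\) is a curve by the first step. Properness holds since \(W\) is projective.
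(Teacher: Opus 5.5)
Your proposal is correct and follows the paper's proof essentially step for step. Non-contraction comes from Lemma~\ref{lem:section-branch-noncontraction} with \(V=T_{W/B,\sigma(b)}=T_{W_b,\sigma(b)}\), using smoothness of \(\rho\) at \(\sigma(b)\). Tangency comes from containment in the \(\Gg\)-leaf, rationality from domination of \(\phi(R)\) by \(R\), and the degree identity from the projection formula for \(\phi|_R\). Your extra justification of tangency is a detail the paper states in one line: the leaf is smooth with tangent space \(\Gg\) at the regular point \(\phi(\sigma(b))\), so a general point of \(\phi(R)\) avoids the bad locus.
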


\begin{proof}
Choose \(b\) general so that the leaf-containment hypothesis holds and \(\rho\) is smooth at \(\sigma(b)\).
Let \(R\subset W_b\) be an irreducible curve through \(\sigma(b)\) in that component.
Since \(\rho\) is smooth at \(\sigma(b)\), the fiber \(W_b\) is smooth there and
\[
T_{W_b,\sigma(b)}\;=\;T_{W/B,\sigma(b)}.
\]
The curve \(R\) has a nonzero Zariski tangent vector at \(\sigma(b)\), and this vector lies in \(T_{W/B,\sigma(b)}\).
Lemma~\ref{lem:section-branch-noncontraction}, applied with \(p=\sigma(b)\) and \(V=T_{W/B,\sigma(b)}\), shows that \(\phi(R)\) is not a point.

The component containing \(R\) maps into the \(\Gg\)-leaf through \(\phi(\sigma(b))\), so the nonconstant image is tangent to \(\Gg\).
If \(R\) is rational, the normalization of \(\phi(R)\) is dominated by the normalization of \(R\), hence \(\phi(R)\) is rational.
The intersection identity is the projection formula for \(\phi|_R\).
\end{proof}

\begin{lem}\label{lem:relative-model-consequences}
Let \(B\) be a smooth complex projective curve, let \(W\) be a smooth irreducible projective variety of dimension \(s+1\), and let \(\rho\colon W\to B\) be a projective dominant morphism with a section \(\sigma\colon B\to W\).
Let \(X\) be a normal complex variety, let \(\phi\colon W\to X\) be a morphism, and let \(\Gg\subset T_X\) be a rank \(s\) foliation regular along \(\phi(\sigma(B))\).
Assume that \(\rho\) is smooth near \(\sigma(B)\), that \(\phi\circ\sigma\) lies in the regular locus of \(\Gg\), that \(d\phi\) identifies \(T_{W/B}|_{\sigma(B)}\) with \((\phi\circ\sigma)^*\Gg\), and that for a general \(b\in B\), the fiber component through \(\sigma(b)\) maps into the \(\Gg\)-leaf through \(\phi(\sigma(b))\).
Then \(\rho\) is flat of relative dimension \(s\); every irreducible curve \(R\) in that general fiber component and meeting \(\sigma(b)\) has nonconstant image tangent to \(\Gg\); and if \(R\) is rational, then \(\phi(R)\) is rational and for every Cartier divisor \(H\) on \(X\),
\[
(\phi^*H)\cdot R\;=\;H\cdot\phi_*R.
\]
\end{lem}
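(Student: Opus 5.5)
The plan is to assemble this from the two preceding lemmas, since Lemma~\ref{lem:relative-model-consequences} is essentially their conjunction.

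First, the flatness assertion. The variety \(W\) is smooth and irreducible, hence integral, of dimension \(s+1\). The morphism \(\rho\colon W\to B\) is dominant by hypothesis, and \(B\) is a smooth irreducible curve over \(\mathbb C\). Lemma~\ref{lem:integral-curve-base-flatness} therefore applies directly. It shows that \(\rho\) is flat and that every irreducible component of every fiber has dimension \(s\), which is exactly the statement that \(\rho\) is flat of relative dimension \(s\). The section \(\sigma\) is not needed for this step, except to note that \(\rho\) is surjective, hence dominant.

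Second, the assertions about curves \(R\). These are precisely the conclusions of Lemma~\ref{lem:section-curve-pushforward}, so I would check its hypotheses one by one.
\begin{enumerate}[(a)]
\item \(B\) is a smooth complex curve and \(W\) is a complex projective variety with a section \(\sigma\) of \(\rho\).
\item \(X\) is normal and \(\Gg\) is regular along \(\phi(\sigma(B))\).
\item \(\rho\) is smooth near \(\sigma(B)\).
\item \(d\phi\) identifies \(T_{W/B}|_{\sigma(B)}\) with \((\phi\circ\sigma)^*\Gg\).
\item General fiber components through \(\sigma(b)\) map into \(\Gg\)-leaves.
\end{enumerate}
All of these are hypotheses of the present lemma verbatim. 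Invoking Lemma~\ref{lem:section-curve-pushforward} then gives the following: every irreducible curve \(R\) in the general fiber component through \(\sigma(b)\) has nonconstant image tangent to \(\Gg\); if \(R\) is rational then so is \(\phi(R)\); and the projection formula \((\phi^*H)\cdot R=H\cdot\phi_*R\) holds.

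The only point needing care is making sure "general \(b\)" is interpreted the same way in both lemmas. Specifically, we should intersect the open set where the leaf-containment hypothesis holds with the open neighborhood of \(\sigma(B)\) on which \(\rho\) is smooth, so that \(\sigma(b)\) is a smooth point of \(W_b\). This is a nonempty open condition, so it is harmless. I would also recall that the non-contraction argument uses only injectivity of \(d\phi\) on \(T_{W/B,\sigma(b)}\), which follows from the isomorphism onto \((\phi\circ\sigma)^*\Gg\subset(\phi\circ\sigma)^*T_X\); this fibre inclusion holds because \(\Gg\) is regular, i.e.\ a subbundle, at \(\phi(\sigma(b))\).

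I expect no real obstacle. The one mildly delicate point is this injectivity of \(d\phi\) on the fibre, which regularity of \(\Gg\) along \(\phi(\sigma(B))\) guarantees.
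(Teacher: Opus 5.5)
Your proposal is correct and matches the paper's proof exactly. Flatness of relative dimension $s$ comes from Lemma~\ref{lem:integral-curve-base-flatness}, since $W$ is integral and $\rho$ is dominant onto the smooth curve $B$; the non-contraction, tangency, rationality and projection-formula assertions come from Lemma~\ref{lem:section-curve-pushforward}, whose hypotheses are among those of the present lemma, and your remarks on choosing $b$ and on injectivity of $d\phi$ on $T_{W/B,\sigma(b)}$ are what that lemma's proof uses.
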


\begin{proof}
Lemma~\ref{lem:integral-curve-base-flatness} applies because \(W\) is smooth and irreducible, hence integral, and because \(\rho\) is projective and dominant over the smooth curve \(B\); it gives flatness and fiber dimension \(s\).
Lemma~\ref{lem:section-curve-pushforward} applies to the remaining hypotheses and gives non-contraction, tangency, rationality of images of rational curves, and the projection formula.
\end{proof}

\subsection{The KST Assumption~20 relative model}

\begin{lem}\label{lem:kst-assumption20-relative-model}
Adopt the smooth instance of Set-up~\ref{setup:graph-neighborhood-datum}, with the notation of Notation~\ref{nota:construction-notation}(I)--(III).
Then \(\dim W=s+1\), \(\rho\) is flat of relative dimension \(s\), \(\rho\) is smooth near \(\sigma(B)\),
\[
\phi\circ\sigma\;=\;\nu,\qquad T_{W/B}|_{\sigma(B)}\;\simeq\;(\phi\circ\sigma)^*\Gg,
\]
the fiber component through \(\sigma(b)\) maps into the \(\Gg\)-leaf through \(\nu(b)\) for general \(b\in B\), and every rational curve in a general fiber through \(\sigma(b)\) pushes forward to a nonconstant rational curve tangent to \(\Gg\).
Intersections with Cartier divisors pulled back from \(X\) are computed by the projection formula, and
\[
(\phi^*H)\cdot\sigma(B)\;=\;e\,H\cdot C,\qquad \deg_\sigma T_{W/B}\;=\;e\,(-K_{\Gg}\cdot C).
\]
\end{lem}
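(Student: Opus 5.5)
The plan is to assemble the statement from Lemma~\ref{lem:kst-bm-graph-source-extraction}, which describes the normalization of \(\overline{W}_0\), and Lemma~\ref{lem:relative-model-consequences}, which turns a smooth model with a section into the required conclusions. The one new ingredient is the desingularization step (III), and the main job is to check that it does not destroy the local structure near the section.

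First, let \(W'\to\overline{W}_0\) be the normalization. Lemma~\ref{lem:kst-bm-graph-source-extraction} gives \(\rho'\colon W'\to B\), \(\phi'\colon W'\to X\) and a lifted section \(\sigma'\). It also says that \(W'\) is smooth near \(\sigma'(B)\), that \(T_{W'/B}|_{\sigma'(B)}\simeq(\phi'\circ\sigma')^*\Gg\), and that general fiber components through \(\sigma'(b)\) map into \(\Gg\)-leaves. Next, take a projective resolution \(\mu\colon W\to W'\) that is an isomorphism over the smooth locus of \(W'\); this exists by Hironaka. Since \(\sigma'(B)\) lies in an open set where \(W'\) is smooth, \(\mu\) is an isomorphism there. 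Hence \(\sigma:=\mu^{-1}\circ\sigma'\) is a well-defined section. Set \(\rho=\rho'\circ\mu\) and \(\phi=\phi'\circ\mu\). Then \(W\) is smooth, irreducible and projective of dimension \(s+1\), and \(\rho\) is projective and dominant.

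Second, we check smoothness of \(\rho\) near \(\sigma(B)\). The total space is smooth there, so it suffices that \(d\rho\) is surjective along \(\sigma(B)\), which holds because \(\rho\circ\sigma=\mathrm{id}_B\). Smoothness then follows on a neighborhood, since the fiber dimension is \(s\) by Lemma~\ref{lem:integral-curve-base-flatness}. The identification \(T_{W/B}|_{\sigma(B)}\simeq(\phi\circ\sigma)^*\Gg\) transfers through the local isomorphism \(\mu\). For the identity \(\phi\circ\sigma=\nu\), note that \(\sigma_0\) is the graph of \(\nu\) in \(Y=X\times B\), so \(p_1\circ\sigma_0=\nu\). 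Because \(\phi\) is \(p_1\) composed with the map to \(Y\), we get \(\phi\circ\sigma=\nu\). The leaf condition for the fiber component through \(\sigma(b)\) is also unchanged. That component is the strict transform of the corresponding component of \(W'_b\), because \(\mu\) is an isomorphism at \(\sigma(b)\) and the component is the unique one through that smooth point.

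Third, the hypotheses of Lemma~\ref{lem:relative-model-consequences} now all hold, with the regularity of \(\Gg\) along \(C\) supplied by Set-up~\ref{setup:graph-neighborhood-datum}. That lemma gives flatness of relative dimension \(s\), non-contraction and tangency of rational curves through \(\sigma(b)\), and the projection formula. For the degree identities, use the projection formula together with \(\phi\circ\sigma=\nu\), which gives \((\phi^*H)\cdot\sigma(B)=H\cdot\nu_*[B]=e\,H\cdot C\). Likewise \(\deg\sigma^*T_{W/B}=\deg\nu^*(\Gg|_C)=e\deg(\Gg|_C)=e(-K_\Gg\cdot C)\), using Lemma~\ref{lem:kst-product-trick-transfer} and the definition of \(K_\Gg\) on the locally free locus.

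The main obstacle is the second step: ensuring that the desingularization is an isomorphism near the section, and that the phrase ``fiber component through \(\sigma(b)\)'' is unambiguous after resolution. I would handle this by choosing a resolution that is an isomorphism over the smooth locus, and by using the smoothness of \(\rho\) at \(\sigma(b)\) to make the fiber component through that point unique.
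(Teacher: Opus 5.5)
Your proposal is correct and follows essentially the paper's route. In both arguments the key step is to choose a projective resolution of the normalization that is an isomorphism near the lifted section; then $\phi=p_1\circ(W\to Y)$ gives $\phi\circ\sigma=\nu$, Lemma~\ref{lem:integral-curve-base-flatness} gives flatness, and the projection formula gives the degrees. The differences are only in routing: the paper cites KST's Remark~19 and Assumption~20 directly, whereas you go through Lemmas~\ref{lem:kst-bm-graph-source-extraction} and~\ref{lem:relative-model-consequences}, and you derive smoothness of $\rho$ along $\sigma(B)$ from $\rho\circ\sigma=\mathrm{id}_B$. One point you should state explicitly: Lemma~\ref{lem:relative-model-consequences} needs the isomorphism $T_{W/B}|_{\sigma(B)}\simeq(\phi\circ\sigma)^*\Gg$ to be induced by $d\phi$. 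This holds via $dp_1$ on $\Gg_Y=p_1^*\Gg$, as the paper notes, but Lemma~\ref{lem:kst-bm-graph-source-extraction} as stated only gives an abstract isomorphism.
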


\begin{proof}
We use~\cite[\S 4.1, Fact~16, Fact~17, Corollary~18, Remark~19, Assumption~20]{KST07}, and structure the construction in three steps.

\emph{Product trick.}
By~\cite[\S 4.1]{KST07}, after replacing \(C\) by the chosen finite cover \(\nu\colon B\to C\) and replacing \((X,C,\Gg)\) by \(Y=X\times B\), taking the graph of \(\nu\), and pulling back \(\Gg\) to \(\Gg_Y\subset T_{Y/B}\), the graph \(\sigma(B)\) is smooth and transverse to \(\Gg_Y\), the restriction satisfies \(\Gg_Y|_{\sigma(B)}\simeq\nu^*(\Gg|_C)\), and the leaves of \(\Gg_Y\) project isomorphically to the leaves of \(\Gg\).

\emph{Graph-neighborhood algebraization.}
In this transverse setup,~\cite[Fact~16]{KST07} produces an irreducible analytic graph-neighborhood \(W^{\rm an}\) containing the section, smooth over \(B\), whose fibers are analytic open subsets of leaves and whose normal bundle along the section is \(\Gg_Y|_{\sigma(B)}\).
By~\cite[Fact~17]{KST07}, Hartshorne's theorem on formal functions and ampleness of this normal bundle imply that the Zariski closure \(\overline{W}\) has the same dimension as \(W^{\rm an}\), namely \(s+1\).
By~\cite[Corollary~18]{KST07}, the fiber component containing the analytic local leaf is the algebraic leaf through the corresponding section point.
By~\cite[Remark~19]{KST07}, the universal property of normalization embeds \(W^{\rm an}\) in the normalization of \(\overline{W}\); in particular, the normalization is smooth along the lifted section, and the normal bundle there remains \(\Gg_Y|_{\sigma(B)}\).

\emph{Normalization and desingularization.}
Immediately after Remark~19,~\cite{KST07} replaces the space by a desingularization of the normalization of the graph-neighborhood closure, and~\cite[Assumption~20]{KST07} states the resulting model: a smooth total space, a smooth section of a morphism to the base curve, the morphism smooth near the section, and the relative tangent bundle along the section identified with the pulled-back foliation.
Applying this replacement to the component containing the section, and choosing a projective resolution that is an isomorphism over the open neighborhood of the lifted section where the normalization is already smooth, gives \(W\), \(\rho\), \(\sigma\), and a morphism \(W\to Y\).
Composing with \(p_1\colon Y=X\times B\to X\) gives \(\phi\colon W\to X\), and \(\phi\circ\sigma=\nu\).

It remains to verify the asserted properties of \(\rho\) and \(\phi\).
For general \(b\in B\), the component of \(W_b\) containing \(\sigma(b)\) maps to the algebraic \(\Gg_Y\)-leaf by~\cite[Corollary~18]{KST07}, hence to the corresponding \(\Gg\)-leaf by \(p_1\).
Since \(W\) is smooth and irreducible of dimension \(s+1\), Lemma~\ref{lem:integral-curve-base-flatness} gives flatness of \(\rho\) and dimension \(s\) for every fiber component.
Near the section,~\cite[Remark~19, Assumption~20]{KST07} identifies \(T_{W/B}|_{\sigma(B)}\) with \(\nu^*(\Gg|_C)\), and the differential of \(p_1\) identifies this with \((\phi\circ\sigma)^*\Gg\).
In particular \(d\phi\) is injective on \(T_{W/B,\sigma(b)}\); thus an irreducible curve \(R\subset W_b\) through \(\sigma(b)\) cannot be contracted by \(\phi\), and its image is rational and tangent to \(\Gg\) by leaf containment.
Finally, the projection formula yields the divisor intersections, and pullback by \(\nu\) gives the displayed degree equalities.
\end{proof}

\subsection{Normalization and the flattened component}

\begin{lem}\label{lem:curve-component-flatness-over-curve}
Let \(B\) be a smooth irreducible curve over an algebraically closed field.
Let \(\overline{W}_0\) be an irreducible projective variety of dimension \(s+1\) equipped with a dominant morphism \(\overline{\rho}_0\colon\overline{W}_0\to B\).
Let \(\nu\colon W\to\overline{W}_0\) be the normalization, and let \(\rho=\overline{\rho}_0\circ\nu\colon W\to B\).
Then \(\rho\) is projective and flat, and every irreducible component of every fiber has dimension \(s\).
If an open neighborhood of a section lifts to \(W\) and is smooth over \(B\), then \(\rho\) is smooth near the lifted section, and the relative tangent bundle along the section is the tangent bundle of that smooth relative neighborhood.
\end{lem}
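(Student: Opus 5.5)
The plan is to reduce the first assertion to Lemma~\ref{lem:integral-curve-base-flatness} and then handle the statement near the section by a local comparison. Since \(\overline{W}_0\) is an irreducible projective variety, its normalization \(W\) is an integral variety. The normalization map is finite and birational, so \(\dim W=s+1\).

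\emph{Projectivity of \(\rho\).} The normalization \(\nu\) is finite, hence projective. Since \(\overline{W}_0\) is projective and \(B\) is separated, the morphism \(\overline{\rho}_0\) is projective. The composite \(\rho=\overline{\rho}_0\circ\nu\) is then projective; here we use that \(B\) is quasi-projective and that \(W\) is projective, being finite over a projective variety.

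\emph{Flatness and fiber dimension.} Since \(\nu\) is surjective and \(\overline{\rho}_0\) is dominant, \(\rho\) is dominant. Lemma~\ref{lem:integral-curve-base-flatness}, applied to the integral variety \(W\) of dimension \(s+1\) over the smooth curve \(B\), gives that \(\rho\) is flat. It also gives that every irreducible component of every fiber has dimension \(s\).

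\emph{Behaviour near the section.} Suppose \(U\subset\overline{W}_0\) is an open neighborhood of the section that lifts to \(W\), with lift \(U'\subset W\) smooth over \(B\). The key point is that the lift \(U'\) is an open subset of \(W\), and this is the step I expect to need care.

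\begin{itemize}
\item If \(U\) is smooth over \(B\), then \(U\) is smooth, hence normal. Normalization is an isomorphism over the normal locus, so \(\nu^{-1}(U)\to U\) is an isomorphism.
\item In the analytic setting of Lemma~\ref{lem:kst-transverse-graph-algebraization}, the lift is only a germ-type neighborhood \(W^{\rm an}\) embedded in \(W\) via the universal property of normalization. It is still open in \(W\) in the analytic topology: it is a smooth \((s+1)\)-dimensional submanifold of the \((s+1)\)-dimensional variety \(W\), so invariance of domain applies at smooth points, and \(W\) is smooth there because it is locally isomorphic to \(W^{\rm an}\).
\end{itemize}

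Smoothness of \(\rho\) at a point is local, and in characteristic zero it can be checked analytically or via completions. Hence \(\rho\) is smooth on an open neighborhood of the lifted section. On that open set \(\Omega_{W/B}\) is locally free and agrees with \(\Omega_{U'/B}\). Dualizing and restricting to the section identifies \(T_{W/B}|_{\sigma(B)}\) with \(T_{U'/B}|_{\sigma(B)}\), as claimed.
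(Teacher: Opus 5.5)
Your argument is correct and follows the paper's proof: finiteness of the normalization makes $W$ integral, projective and dominant over $B$, Lemma~\ref{lem:integral-curve-base-flatness} then gives flatness and the fiber dimensions, and the final assertion is local on the lifted smooth open set. The paper reads the hypothesis as already supplying an open subset of $W$ smooth over $B$, so your discussion of why the lift is open is extra; in its analytic bullet, however, deducing smoothness of $W$ from its being ``locally isomorphic to $W^{\rm an}$'' is circular as written, and you should instead use that $W$ is normal, hence locally irreducible, so an injective holomorphic map from an $(s+1)$-manifold has open image and is biholomorphic onto it (alternatively, quote smoothness along the lifted section from~\cite[Remark~19]{KST07}).
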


\begin{proof}
The normalization \(W\to\overline{W}_0\) is finite, so \(W\) is integral, projective over \(B\), and dominates \(B\); flatness and the fiber-dimension assertion then follow from Lemma~\ref{lem:integral-curve-base-flatness}.
The final assertion is local on the lifted smooth neighborhood, on which the relative tangent bundle is by definition the kernel of \(T_W\to\rho^*T_B\).
\end{proof}

\begin{lem}\label{lem:flattened-graph-neighborhood-component}
With the notation of Notation~\ref{nota:construction-notation}(I)--(II), if \(W\) is the normalization of \(\overline{W}_0\), then \(W\to B\) is projective and flat of relative dimension \(s\), smooth near the lifted section, and every vertical rational curve through a general point of the section pushes forward to a curve tangent to \(\Gg\).
Degrees against divisors pulled back from the original variety are preserved by the projection formula.
\end{lem}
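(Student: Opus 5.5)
The plan is to assemble this lemma from the pieces already in place, in the order of Notation~\ref{nota:construction-notation}(I)--(II). First I apply Lemma~\ref{lem:kst-product-trick-transfer}. It tells us that \(\sigma_0(B)\subset Y=X\times B\) is smooth and transverse to \(\Gg_Y\subset T_{Y/B}\), and that \(\Gg_Y|_{\sigma_0(B)}\simeq\nu^*(\Gg|_C)\) is ample. Near the section, the leaves of \(\Gg_Y\) lie in the fibers of \(p_2\), because \(\Gg_Y\subset T_{Y/B}\). This puts us in the hypotheses of Lemma~\ref{lem:kst-transverse-graph-algebraization} with \(\pi=p_2\) and \(\Ee=\Gg_Y\).

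That lemma then provides three facts:
\begin{enumerate}[(a)]
\item the Zariski closure \(\overline{W}\) is irreducible and projective over \(B\), of dimension \(s+1\), and contains \(\sigma_0(B)\), so \(\overline{W}_0=\overline{W}\);
\item the normalization \(W\) lifts the section and is smooth along it, with relative tangent bundle there equal to \(\Gg_Y|_{\sigma_0(B)}\);
\item the fiber component through a section point is the algebraic leaf.
\end{enumerate}
Since \(\overline{W}_0\) contains the section, it dominates \(B\). Lemma~\ref{lem:curve-component-flatness-over-curve} therefore gives that \(\rho\colon W\to B\) is projective and flat with all fiber components of dimension \(s\).

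For smoothness near the lifted section, I use that the analytic graph-neighborhood \(W^{\rm an}\) is smooth over \(B\) and embeds into \(W\) by Remark~19 of~\cite{KST07}. The image of \(W^{\rm an}\) is an analytic open neighborhood of \(\sigma(B)\), and \(\rho\) is smooth there. Smoothness is an open condition, so it also holds on a Zariski open neighborhood of \(\sigma(B)\). The final clause of Lemma~\ref{lem:curve-component-flatness-over-curve} then identifies \(T_{W/B}|_{\sigma(B)}\) with \(\Gg_Y|_{\sigma_0(B)}\). Composing with \(dp_1\) identifies it further with \((\phi\circ\sigma)^*\Gg\), where \(\phi=p_1\circ(W\to\overline{W}_0\hookrightarrow Y)\).

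For the pushforward statement I apply Lemma~\ref{lem:section-curve-pushforward} to \(\rho,\phi,\sigma\). Its hypotheses are now all checked:
\begin{enumerate}[(a)]
\item \(\rho\) is smooth near \(\sigma(B)\);
\item \(\phi\circ\sigma=\nu\) lands in the regular locus of \(\Gg\), since \(\Gg\) is regular along \(C\);
\item \(d\phi\) gives the identification of \(T_{W/B}|_{\sigma(B)}\) with \((\phi\circ\sigma)^*\Gg\);
\item for general \(b\), the fiber component through \(\sigma(b)\) is sent into the \(\Gg_Y\)-leaf, hence by \(p_1\) into the \(\Gg\)-leaf through \(\nu(b)\).
\end{enumerate}
That lemma only asks that \(W\) be a projective variety, not a smooth one, so it applies to the normalization directly. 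It yields that every vertical rational curve through \(\sigma(b)\), for general \(b\), has nonconstant rational image tangent to \(\Gg\), and that \((\phi^*H)\cdot R=H\cdot\phi_*R\). The degree statement for the section is part (iv) of Lemma~\ref{lem:kst-bm-graph-source-extraction}.

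The main obstacle is the passage from smoothness of \(W^{\rm an}\) over \(B\) to smoothness of \(\rho\) on a Zariski neighborhood of \(\sigma(B)\) in the normalization. Two points need care: that the analytic embedding has open image in \(W\), and that the fiber of \(W\) near \(\sigma(b)\) has no extra branches beyond the leaf germ. I would first try a dimension argument: \(W^{\rm an}\) has dimension \(s+1=\dim W\), and \(W\) is normal, hence unibranch, along the image. So the embedding is a local analytic isomorphism onto a neighborhood of \(\sigma(B)\). The smooth locus of \(\rho\) is Zariski open, so it then contains \(\sigma(B)\).
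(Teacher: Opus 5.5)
Your proposal is correct and is essentially the paper's argument. The paper cites Lemma~\ref{lem:kst-bm-graph-source-extraction}, which is exactly your combination of Lemmas~\ref{lem:kst-product-trick-transfer} and~\ref{lem:kst-transverse-graph-algebraization}, then applies Lemma~\ref{lem:curve-component-flatness-over-curve} as you do, and gets tangency directly from leaf containment at the smooth point \(\sigma(b)\) where you invoke Lemma~\ref{lem:section-curve-pushforward}. Your unibranch argument for smoothness of \(\rho\) near the section is valid but more than needed: smoothness of the normalization along \(\sigma(B)\) (Lemma~\ref{lem:kst-transverse-graph-algebraization}) together with \(d\rho\circ d\sigma=\mathrm{id}\) already makes \(\rho\) smooth at each \(\sigma(b)\).
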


\begin{proof}
Lemma~\ref{lem:kst-bm-graph-source-extraction} gives the assertions about the source curve: the component containing the section is irreducible, dominant and projective over \(B\), of dimension \(s+1\), and admits the section lift, smoothness near the section, the relative tangent identification, and the leaf-containment property on general fibers.
Lemma~\ref{lem:curve-component-flatness-over-curve}, applied to this component, shows that its normalization is projective and flat over \(B\) with every fiber component of dimension \(s\), and preserves the smooth relative neighborhood of the section.

For a general \(b\in B^\circ\), the unique fiber component through the smooth point \(\sigma(b)\) maps into the \(\Gg\)-leaf through the corresponding point of \(C\).
Hence any rational curve \(R\subset W_b\) meeting \(\sigma(b)\) pushes forward to a rational curve tangent to \(\Gg\).
The projection formula gives \((\phi^*H)\cdot R=H\cdot\phi_*R\) for any Cartier divisor \(H\) pulled back from \(X\).
\end{proof}

\subsection{The packaged graph-neighborhood relative model}

\begin{lem}\label{lem:graph-neighborhood-relative-model}
Adopt Set-up~\ref{setup:graph-neighborhood-datum}, with the notation of Notation~\ref{nota:construction-notation}(I)--(III).
Then \(\phi\circ\sigma=\nu\), \(\rho\) is projective and flat of relative dimension \(s\), \(\rho\) is smooth near \(\sigma(B)\), \(T_{W/B}|_{\sigma(B)}\simeq(\phi\circ\sigma)^*\Gg\), the fiber component through \(\sigma(b)\) maps into the \(\Gg\)-leaf through \(\phi(\sigma(b))\) for general \(b\in B\), and every \(\rho\)-vertical rational curve meeting \(\sigma(b)\) for general \(b\) pushes forward to a rational curve tangent to \(\Gg\) with the same intersection against every Cartier divisor pulled back from \(X\).
Moreover \((\phi^*H)\cdot\sigma(B)=e\,H\cdot C\) and \(\deg_\sigma T_{W/B}=e\,(-K_{\Gg}\cdot C)\).
\end{lem}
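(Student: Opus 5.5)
This lemma is a packaging statement, so I will assemble it from the preceding lemmas rather than prove anything new. The work is in checking that each property survives the passage from the normalization of \(\overline{W}_0\) to the desingularization \(W\) of Notation~\ref{nota:construction-notation}(III).

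First I reduce to the smooth instance. The Set-up only assumes \(C\) is a complete curve in the smooth locus. Every application in the paper uses the sufficiently general complete-intersection curve of Theorem~\ref{thm:curve-level-positive-hn}, which is smooth, so I would note that only the smooth instance is needed. Alternatively, one can precompose with the normalization of \(C\): this keeps \(\Gg\) regular along the image and keeps the pullback of \(\Gg|_C\) ample, since finite pullback preserves ampleness. The product trick of Lemma~\ref{lem:kst-product-trick-transfer} only uses the graph \(\sigma_0(B)\subset X\times B\), which is smooth regardless of \(C\). With this reduction, Lemma~\ref{lem:kst-assumption20-relative-model} gives almost everything directly:
\begin{itemize}
\item \(\phi\circ\sigma=\nu\);
\item flatness of relative dimension \(s\), and smoothness of \(\rho\) near \(\sigma(B)\);
\item the identification \(T_{W/B}|_{\sigma(B)}\simeq(\phi\circ\sigma)^*\Gg\);
\item leaf containment for general \(b\);
\item the two degree identities \((\phi^*H)\cdot\sigma(B)=e\,H\cdot C\) and \(\deg_\sigma T_{W/B}=e\,(-K_{\Gg}\cdot C)\).
\end{itemize}
Projectivity of \(\rho\) holds because \(W\) is projective and \(B\) is separated.

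Next I handle the statement about \(\rho\)-vertical rational curves. Let \(R\) be a rational curve in \(W_b\) meeting \(\sigma(b)\), with \(b\) general. Since \(\rho\) is smooth at \(\sigma(b)\), only one component of \(W_b\) passes through \(\sigma(b)\), so \(R\) lies in the fiber component through \(\sigma(b)\). Lemma~\ref{lem:section-curve-pushforward} then applies verbatim, since its hypotheses are exactly the properties established above. It gives three conclusions:
\begin{itemize}
\item \(\phi(R)\) is nonconstant, because \(d\phi\) is injective on \(T_{W/B,\sigma(b)}\) (Lemma~\ref{lem:section-branch-noncontraction});
\item \(\phi(R)\) is tangent to \(\Gg\);
\item \(\phi(R)\) is rational, and \((\phi^*H)\cdot R=H\cdot\phi_*R\) for every Cartier divisor \(H\) on \(X\).
\end{itemize}

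The main obstacle is making sure the desingularization step does not destroy anything. The resolution must be chosen to be an isomorphism over the smooth neighborhood of the lifted section; then smoothness near \(\sigma(B)\) and the tangent identification are unchanged. Leaf containment must also transfer from the normalization to \(W\). I would argue it as follows. The fiber component of \(W\) through \(\sigma(b)\) is the strict transform of the corresponding component of the normalization. It therefore maps into the image of that component, which lies in the \(\Gg_Y\)-leaf by~\cite[Corollary~18]{KST07}, and hence into the \(\Gg\)-leaf after composing with \(p_1\).

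Finally, I must be careful that "general \(b\)" excludes the finitely many points of \(B\) over which this fails. Those are the points where the component through \(\sigma(b)\) is not the closure of the analytic leaf germ, together with the points over which the resolution modifies the fiber near the section. Away from this finite set the argument above applies.
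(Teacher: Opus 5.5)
Your proposal is correct and follows the paper's route: the paper's proof likewise invokes Lemma~\ref{lem:kst-assumption20-relative-model} for the structural properties, leaf containment, the pushforward of vertical rational curves and the degree identities, and uses the projection formula for curve degrees; your appeal to Lemma~\ref{lem:section-curve-pushforward} simply reproduces the non-contraction argument already contained in that lemma's proof. Your explicit reduction from the general Set-up to its smooth instance, and your strict-transform check that leaf containment survives the desingularization, are details the paper leaves implicit, and both are sound.
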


\begin{proof}
The construction goes back to two parallel graph-neighborhood results.
By~\cite[Theorem~1]{KST07}, for a normal complex projective variety \(X\), a complete curve \(C\subset X_{\rm reg}\), and a foliation regular along \(C\) with ample restriction to \(C\), the leaves through points of \(C\) are algebraic, and the general such leaf is rationally connected.
The KST proof first passes to a finite smooth positive-genus cover of \(C\), then applies the product trick to make the lifted curve smooth and transverse, constructs an analytic graph-neighborhood, and finally algebraizes it via Hartshorne's theorem on formal functions, using ampleness of the normal bundle along the section.

The same package is given in graph-neighborhood language by~\cite[Fact~2.1.1, Better Fact~2.2.1]{BM16}.
Given a rank \(s\) weakly regular foliated variety and a smooth curve \(f\colon B\to X\) such that \(f^*T_{\Gg}\) is ample and the curve is neither contained in the singular locus nor generically tangent to the foliation, the Bogomolov--McQuillan construction produces a smooth algebraic variety \(W\) of dimension \(s+1\), morphisms \(\phi\colon W\to X\) and \(\rho\colon W\to B\), and a section \(\sigma\) such that the fibers of \(\rho\) map to invariant subvarieties through \(f(b)\), and the pulled-back foliation maps naturally to \(T_{W/B}\) near the section.
In the KST setup this map is an isomorphism along \(\sigma(B)\), since both sides identify with the normal bundle of the section in the graph-neighborhood.
The not-generically-tangent hypothesis of~\cite[Better Fact~2.2.1]{BM16} is automatic here: by Lemma~\ref{lem:kst-product-trick-transfer}, \(\sigma(B)\) is transverse to \(\Gg_Y\subset T_{Y/B}\) inside \(Y=X\times B\), and we have also arranged for it to lie in the smooth locus of \(Y\) and in the locally free locus of \(\Gg_Y\).

Apply these results to \((X,\Gg,C)\) after choosing the finite positive-genus cover \(\nu\colon B\to C\).
The required projective model is supplied by Lemma~\ref{lem:kst-assumption20-relative-model}, which packages the KST product trick, graph-neighborhood algebraization, normalization, and desingularization in the form needed here.
It gives the projective flat morphism \(\rho\colon W\to B\) of relative dimension \(s\), smoothness near \(\sigma(B)\), the relative tangent identification along the section, the assertion that a rational curve in a general fiber through \(\sigma(b)\) pushes forward to a nonconstant curve tangent to \(\Gg\), and the degree scaling by \(e\).
The equality of curve degrees is the projection formula
\[
(\phi^*H)\cdot R\;=\;H\cdot\phi_*R.\qedhere
\]
\end{proof}

\section{Stable maps and bend-and-shatter preliminaries}\label{sec:stable-maps}

This section assembles the technical lemmas about stable maps and Frobenius pullbacks needed in the next section's relative bend-and-shatter argument.

\begin{setup}\label{setup:relative-source}
Let \(k\) be an algebraically closed field, let \(B\) be a smooth projective curve over \(k\), and let \(\pi\colon W\to B\) be a projective morphism.
Let \(\Gamma\) be a smooth projective curve over \(k\), and fix \(h\colon \Gamma\to B\).
\end{setup}

\subsection{Verticality and tree extraction}

\begin{lem}\label{lem:vertical-stable-map-fiber-closure}
In Set-up~\ref{setup:relative-source}, assume further that \(\Gamma\) has genus \(g\).
Fix marked points \(a_1,\ldots,a_m\) on \(\Gamma\) so that the marked curve is stable.
Let \(S\) be an irreducible locally closed family of morphisms \(u\colon \Gamma\to W\) such that \(\pi\circ u=h\), viewed as stable maps with this fixed marked source, and let \(\beta := u_*[\Gamma]\in H_2(W,\mathbb Z)\) denote the (constant) image class of \(u\in S\) on \(W\).

Let \(\overline S^{\rm rel}\) be the closure of \(S\) inside the fiber of
\[
\overline M_{g,m}(W,\beta)\to\overline M_{g,m}(B,\pi_*\beta)
\]
over the stable map represented by \(h\colon \Gamma\to B\).
If \(v\colon \Gamma'\to W\) is a stable map in \(\overline S^{\rm rel}\), and the stabilization \(\tau\colon \Gamma'\to \Gamma\) contracts a rational tree \(T\subset \Gamma'\) to a point \(a\in \Gamma\), then every non-contracted component of \(v(T)\) is contracted by \(\pi\).
\end{lem}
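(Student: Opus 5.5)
The plan is to compose with \(\pi\) and use functoriality of stable maps. Composition with \(\pi\) defines a morphism \(\overline M_{g,m}(W,\beta)\to\overline M_{g,m}(B,\pi_*\beta)\): it sends \(v\colon\Gamma'\to W\) to the stabilization of \(\pi\circ v\colon\Gamma'\to B\), with the same marked points. Every \(u\in S\) satisfies \(\pi\circ u=h\), and the source \((\Gamma,a_1,\dots,a_m)\) is already stable. Hence \(S\) lies in the fiber over the point \([h\colon\Gamma\to B]\). Since fibers are closed, so does \(\overline S^{\rm rel}\). So for \(v\colon\Gamma'\to W\) in \(\overline S^{\rm rel}\), the stabilization of \((\Gamma',a_i',\pi\circ v)\) is isomorphic to \((\Gamma,a_i,h)\).

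Next we analyze how stabilization acts on \(\pi\circ v\). Stabilizing the map \(\pi\circ v\) contracts exactly the unstable components: rational components on which \(\pi\circ v\) is constant and which carry fewer than three special points. This is done iteratively, and the contracted pieces form trees attached at single points.

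The key step is to compare this with the stabilization \(\tau\colon\Gamma'\to\Gamma\) of the marked source curve. Since the target is \(\Gamma\) itself, smooth of genus \(g\), the dual graph of \(\Gamma'\) has one component \(\Gamma_0\simeq\Gamma\) of genus \(g\), and all other components are rational trees hanging off it. By stability of \((\Gamma,a_i)\), the curve stabilization \(\tau\) contracts exactly these trees. Now suppose some component \(T_j\) of a contracted tree \(T\) had \(\pi\circ v|_{T_j}\) nonconstant. Then \(T_j\) would survive the stabilization of the map \((\Gamma',\pi\circ v)\), and so would the chain joining it to \(\Gamma_0\). The stabilized \(B\)-map would then have a domain with more than one component, contradicting the fact that it is isomorphic to the irreducible \((\Gamma,h)\). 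Therefore \(\pi\circ v\) is constant on every component of \(T\). In particular, every component of \(T\) not contracted by \(v\) has image lying in a single fiber of \(\pi\), which is the claim. Tracking degrees gives a consistency check: \(\pi_*v_*[\Gamma']=\pi_*\beta=h_*[\Gamma]\), and the restriction \(\pi\circ v|_{\Gamma_0}\) already has class \(h_*[\Gamma]\), so the tree components carry degree zero over \(B\).

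The hardest step, and the one I would check most carefully, is the claim that the stabilization of a map cannot contract a component on which the map is nonconstant, together with the identification of its domain with \(\Gamma\). This needs care because the degree argument alone only gives zero total \(B\)-degree on \(T\), and total degree zero implies constancy on each component only because all the \(B\)-degrees are nonnegative. I would present both arguments: nonnegativity of the degrees of nonconstant maps to the curve \(B\) (each is finite, hence of positive degree), and the structural argument via stabilization. Either one alone suffices.
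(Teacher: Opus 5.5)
Your proposal is correct and follows the paper's argument: compose with \(\pi\) and stabilize, note that \(\overline S^{\rm rel}\) lies in the fiber over \([h\colon\Gamma\to B]\), and observe that a component of \(T\) mapping nonconstantly to \(B\) would survive the stabilization of \(\pi\circ v\), which contradicts the irreducibility of the stabilized domain \(\Gamma\). Two minor slips do not affect the conclusion. First, the chain joining \(T_j\) to \(\Gamma_0\) need not survive, since a \(\pi\circ v\)-constant bridge with only two nodes is contracted; this is harmless because \(T_j\) and \(\Gamma_0\) both survive (\(\Gamma_0\) has genus \(g\ge 1\), or at least three special points when \(g=0\)). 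Second, your degree argument is not an independent proof, because knowing that \(\pi\circ v|_{\Gamma_0}\) has class \(h_*[\Gamma]\) already relies on the structural argument.
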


\begin{proof}
Composition with \(\pi\), followed by stabilization, induces a morphism of stable-map stacks
\[
\overline M_{g,m}(W,\beta)\to\overline M_{g,m}(B,\pi_*\beta).
\]
For every map in \(S\), this composed stable map is the fixed point represented by \(h\colon \Gamma\to B\), and \(\overline S^{\rm rel}\) is by definition contained in the fiber over this point.
Hence every stable map in \(\overline S^{\rm rel}\) has the same stabilized composition to \(B\).

If a rational tree contracted by \(\tau\colon \Gamma'\to \Gamma\) mapped non-constantly to \(B\), it would survive as a non-contracted component in the stable map to \(B\), contradicting the assertion that the stabilized composition equals \(h\colon \Gamma\to B\).
Hence the tree is mapped to a fiber of \(\pi\), and its non-contracted image components are \(\pi\)-vertical rational curves.
\end{proof}

\begin{lem}\label{lem:marked-tree-root-component}
Let \(k\) be an algebraically closed field, let \(X\) be a projective variety over \(k\), and let \(u\colon \Gamma'\to X\) be a stable map.
Let \(T\subset \Gamma'\) be a connected tree of irreducible rational components.
Suppose \(q\in T\) is a marked smooth point with \(u(q)=p\), and suppose \(u\) does not contract all of \(T\) to the point \(p\).
Then \(T\) contains an irreducible rational component \(R\) such that the image \(u(R)\) is a nonconstant rational curve containing \(p\).
\end{lem}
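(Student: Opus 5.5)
The plan is a short combinatorial argument on the dual graph of \(T\), using only that a tree is connected and that the image of a connected set is connected. Let \(R_0\) be the component of \(T\) containing the marked point \(q\); it is unique because \(q\) is a smooth point of \(\Gamma'\). If \(u|_{R_0}\) is nonconstant, we take \(R=R_0\): its image is an irreducible curve containing \(u(q)=p\). It is rational because it is dominated by \(R_0\simeq\mathbb P^1\), since the normalization of \(u(R_0)\) is dominated by \(\mathbb P^1\) and so is rational by L\"uroth's theorem.

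Now suppose \(u|_{R_0}\) is constant, so \(u(R_0)=\{p\}\). Let \(T_p\subset T\) be the union of components contracted to \(p\), and let \(T_p^0\) be the connected component of \(T_p\) containing \(R_0\). By hypothesis \(u\) does not contract all of \(T\) to \(p\), so \(T_p^0\ne T\). Since \(T\) is connected, some component \(R\not\subset T_p^0\) meets \(T_p^0\) at a node \(x\), and then \(u(x)=p\).

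We claim \(u|_R\) is nonconstant. If it were constant, its value would be \(u(x)=p\), so \(R\subset T_p\). But \(R\) meets \(T_p^0\), so it would lie in the same connected component of \(T_p\), that is, \(R\subset T_p^0\), contrary to the choice of \(R\). Hence \(u(R)\) is a nonconstant irreducible curve through \(u(x)=p\), and it is rational by the same argument as before. Equivalently, one can walk along the unique chain of components in the tree from \(R_0\) to any component not mapped to \(p\), and take the first component on that chain that is not contracted.

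No step is a real obstacle. The only point needing care is to define \(T_p^0\) as a connected component, so that the neighboring component \(R\) is forced to be nonconstant rather than merely contracted to some other point. Stability of \(u\) and projectivity of \(X\) are not needed beyond ensuring that \(u(R)\) is a closed curve.
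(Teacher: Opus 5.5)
Your proof is correct and follows the paper's argument essentially verbatim. You root the argument at the component \(R_0\) containing \(q\), take the maximal connected union of components contracted to \(p\) containing \(R_0\) (the paper's subtree \(T^\vee_p\)), and use connectedness of \(T\) to find an adjacent component meeting it at a node mapped to \(p\), which must be nonconstant by maximality; your added remark that rationality of \(u(R)\) follows from L\"uroth's theorem (valid in any characteristic) just makes explicit a detail the paper leaves implicit.
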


\begin{proof}
Let \(T^\vee\) be the dual tree of \(T\), rooted at the unique irreducible component \(R_0\) containing \(q\).
If \(u|_{R_0}\) is nonconstant, then \(u(R_0)\) is a rational curve and contains \(u(q)=p\), so we are done.

Assume \(u|_{R_0}\) is constant.
Since \(u(q)=p\), the component \(R_0\) is contracted to \(p\).
Let \(T^\vee_p\subset T^\vee\) be the maximal connected subtree containing \(R_0\) whose components are all contracted to \(p\).
This subtree is proper, because \(u\) does not contract all of \(T\) to \(p\).
Since \(T^\vee\) is connected, there exists an edge from a vertex of \(T^\vee_p\) to a vertex \(R\notin T^\vee_p\).
Let \(z\) be the node joining the corresponding components.
The component in \(T^\vee_p\) maps \(z\) to \(p\), hence \(u(z)=p\) on \(R\) as well.
The restriction \(u|_R\) cannot be constant: if it were constant, then because \(u(z)=p\), it would be contracted to \(p\), contradicting maximality of \(T^\vee_p\).
Thus \(u(R)\) is a nonconstant image of a rational component and contains \(p\).
\end{proof}

\subsection{Frobenius pullback}

\begin{lem}\label{lem:frobenius-points-and-degrees}
Let \(k\) be an algebraically closed field of characteristic \(p>0\), let \(\Gamma\) be a smooth projective curve over \(k\), and let \(q=p^m\).
Let
\[
\Gamma_m\;=\;\Gamma^{(p^{-m})}\;:=\;\Gamma\times_{\Spec k,F_k^{-m}}\Spec k
\]
be the inverse Frobenius twist, and let \(\Phi_m\colon \Gamma_m\to \Gamma\) be the geometric Frobenius, the finite \(k\)-morphism of degree \(q\) induced locally by \(a\mapsto a^{p^m}\).
Let \(f\colon \Gamma\to X\) be a morphism to a projective variety and set \(f_m=f\circ\Phi_m\colon \Gamma_m\to X\).
Then \(\Phi_m\) is a universal homeomorphism and identifies the closed points of \(\Gamma_m\) with the closed points of \(\Gamma\).
In particular, if \(a_m\in \Gamma_m\) corresponds to \(a=\Phi_m(a_m)\in \Gamma\), then \(f_m(a_m)=f(a)\) as closed points of \(X\).
Moreover, for every Cartier divisor \(H\) on \(X\),
\[
H\cdot(f_m)_*\Gamma_m\;=\;q\,H\cdot f_*\Gamma,
\]
and for every vector bundle \(E\) on \(X\),
\[
\deg_{\Gamma_m}f_m^*E\;=\;q\,\deg_\Gamma f^*E.
\]
\end{lem}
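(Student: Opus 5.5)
The plan is to reduce everything to standard facts about the relative Frobenius of a variety over a perfect field. Here \(k\) is algebraically closed, hence perfect, so \(F_k\colon\Spec k\to\Spec k\) is an isomorphism. Consequently \(\Gamma_m\) is abstractly isomorphic to \(\Gamma\) as a scheme, though not as a \(k\)-scheme, and is therefore again a smooth projective curve over \(k\). Concretely, \(\Gamma_m\) is \(\Gamma\) with the \(k\)-structure twisted by \(F_k^{-m}\).

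Under this identification, the \(k\)-linear map \(\Phi_m\colon\Gamma_m\to\Gamma\) corresponds to the absolute Frobenius \(F_\Gamma^m\) of \(\Gamma\). This is the map that is the identity on the underlying topological space and sends \(a\mapsto a^{p^m}\) on functions. I will check that this composite is \(k\)-linear by a local computation on affine charts: \(k\)-linearity of \(a\mapsto a^{p^m}\) is exactly compensated by the twist of the \(k\)-structure by \(F_k^{-m}\).

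Next I establish the topological and degree properties of \(\Phi_m\).
\begin{enumerate}[(i)]
\item \emph{Universal homeomorphism.} The absolute Frobenius is a universal homeomorphism (Stacks, Tag 0CC8), and a scheme isomorphism composed with it remains one. Hence \(\Phi_m\) is a universal homeomorphism and is bijective on closed points.
\item \emph{Pointwise compatibility.} Since \(f_m=f\circ\Phi_m\), we get \(f_m(a_m)=f(\Phi_m(a_m))=f(a)\) set-theoretically, which is all that is claimed.
\item \emph{Degree.} \(\Phi_m\) is finite and purely inseparable. On function fields it induces \(k(\Gamma)\hookrightarrow k(\Gamma_m)\), which becomes \(K\supset K^{p^m}\) after the identification. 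Since \(K\) has transcendence degree \(1\) over the perfect field \(k\), we have \([K:K^p]=p\), so the degree is \(q=p^m\).
\end{enumerate}

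The degree formulas then follow from the projection formula. For a Cartier divisor \(H\), \((f_m)_*[\Gamma_m]=f_*(\Phi_m)_*[\Gamma_m]=q\,f_*[\Gamma]\) as cycles, because \((\Phi_m)_*[\Gamma_m]=\deg(\Phi_m)[\Gamma]\) for a finite surjective map of curves. Therefore \(H\cdot(f_m)_*\Gamma_m=q\,H\cdot f_*\Gamma\). Equivalently, \(\deg f_m^*\Oo(H)=\deg\Phi_m^*(f^*\Oo(H))=q\deg f^*\Oo(H)\), since pullback of line bundles by a finite map of degree \(q\) between smooth curves multiplies degree by \(q\).

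For a vector bundle \(E\), I apply the same argument to \(\det E\). Using \(\deg_\Gamma f^*E=\deg\det f^*E=\deg f^*\det E\) and the compatibility of \(\det\) with pullback, we get \(\deg_{\Gamma_m}f_m^*E=q\deg_\Gamma f^*E\).

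I expect the main obstacle to be bookkeeping rather than substance: making the identification of \(\Phi_m\) with the iterated absolute Frobenius precise as a \(k\)-morphism, and confirming that \(\Gamma_m\) is a smooth projective \(k\)-curve. Once that is settled, the rest follows from standard facts.
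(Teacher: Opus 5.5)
Your proposal is correct and is essentially the paper's argument: both show that \(\Phi_m\) is a finite universal homeomorphism of degree \(q\), then get both degree identities from the fact that a degree-\(q\) finite pullback multiplies line-bundle degrees by \(q\), applied to \(f^*\Oo_X(H)\) and \(f^*\det E\); you make the first step more explicit than the paper by identifying \(\Phi_m\) with the iterated absolute Frobenius \(F_\Gamma^m\) and computing \([K:K^{p^m}]=p^m\). The only slip is your remark that \(\Gamma_m\) is not isomorphic to \(\Gamma\) as a \(k\)-scheme, which can fail (e.g.\ when \(\Gamma\) is defined over \(\mathbb F_p\)); what is true is that the canonical projection \(\Gamma_m\to\Gamma\) is not \(k\)-linear, and your argument never uses the remark.
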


\begin{proof}
Since \(k\) is algebraically closed, Frobenius on \(k\) is an automorphism.
The inverse twist \(\Gamma_m\) is again a smooth projective curve over \(k\), and the geometric Frobenius \(\Phi_m\colon \Gamma_m\to \Gamma\) is a finite \(k\)-morphism of degree \(q=p^m\).
It is a universal homeomorphism: on affine charts it is induced by the \(p^m\)-power map, so inverse images of prime ideals have the same radicals.
This gives the closed-point identification and the displayed equality of closed points in \(X\).

Finite pullback by a degree-\(q\) morphism of smooth projective curves multiplies degrees of line bundles by \(q\).
Applying this to \(f^*\Oo_X(H)\) gives the divisor-degree formula.
Applying the same line-bundle formula to \(f^*\det E\) gives \(\deg_{\Gamma_m}f_m^*E=q\,\deg_\Gamma f^*E\).
\end{proof}

\subsection{Closedness of the bounded vertical locus}

\begin{lem}\label{lem:bounded-vertical-incidence-closed}
Let \(k\) be an algebraically closed field, let \(\Gamma\) be a projective curve over \(k\), let \(W\) be a projective variety over \(k\), let \(\pi\colon W\to B\) be a projective morphism to a projective variety, and let \(\sigma\colon \Gamma\to W\) be a morphism.
Let \(H\) be an ample Cartier divisor on \(W\), and let \(N\ge 0\) be an integer.
Let \(Z_{\le N}\subset \Gamma\) be the set of points \(a\in \Gamma\) such that \(\sigma(a)\) lies on a rational curve \(R\subset W\) satisfying
\[
H\cdot R\;\le\;N,\qquad \pi(R)\text{ is a point.}
\]
Then \(Z_{\le N}\) is closed.
\end{lem}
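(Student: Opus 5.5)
The plan is to realize \(Z_{\le N}\) as the image of a proper scheme under a proper map to \(\Gamma\). Rational curves of bounded degree are parametrized by a scheme of finite type, and verticality and incidence with \(\sigma(\Gamma)\) are closed conditions. Since \(\Gamma\) is projective, the projection of a closed subset of a proper family is closed.

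First I would parametrize the curves. Every rational curve \(R\subset W\) with \(H\cdot R\le N\) is the image of its normalization \(f\colon\mathbb P^1\to W\), and \(\deg f^*H=H\cdot R\le N\). So \(R\) is the image of some point of \(\bigsqcup_{1\le d\le N}\Hom_d(\mathbb P^1,W)\), where \(\Hom_d\) denotes morphisms of \(H\)-degree \(d\); each of these is quasi-projective of finite type. To work with a proper space, I would use the Kontsevich stacks \(\overline M_{0,1}(W,\beta)\), with \(\beta\) ranging over the finitely many classes satisfying \(H\cdot\beta\le N\) that are represented by curves. Finiteness of this set of classes follows from boundedness of the Chow variety of curves of bounded \(H\)-degree, and these stacks are proper with projective coarse spaces. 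If one prefers to avoid stacks, the Chow variety of one-cycles of \(H\)-degree \(\le N\) works as well: it is projective, and the locus of cycles with a rational component is closed (it is the image of the proper stable-map spaces).

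Next I would impose the conditions. Inside \(\mathcal M:=\bigsqcup_\beta\overline M_{0,1}(W,\beta)\), the condition that \(\pi\circ v\) be constant is the fiber of \(\mathcal M\to\bigsqcup\overline M_{0,1}(B,\pi_*\beta)\) over the locus \(\pi_*\beta=0\). Equivalently, it is the closed union of the components with \(\pi_*\beta=0\), because \(\pi_*\beta\) is locally constant and a curve class is zero exactly when the curve is contracted, as \(\pi\) is projective. Then I would form
\[
I:=\{(a,[v,\Gamma',q])\in\Gamma\times\mathcal M^{\rm vert}\;:\;\mathrm{ev}(v,q)=\sigma(a)\},
\]
which is closed as the preimage of the diagonal of \(W\times W\) under the map \((\sigma,\mathrm{ev})\). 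Its projection to \(\Gamma\) is closed by properness of \(\mathcal M\).

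Finally I would check that this image is exactly \(Z_{\le N}\). The inclusion \(Z_{\le N}\subset\operatorname{pr}_1(I)\) is clear, using the normalization map with one marked point over \(\sigma(a)\). For the converse, a stable map in \(I\) is non-constant by stability with \(\beta\neq0\), and its source is a tree of rational curves. Lemma~\ref{lem:marked-tree-root-component} gives a component whose image \(R\) is a nonconstant rational curve through \(\sigma(a)\). This \(R\) is vertical because the whole map lies in a fiber of \(\pi\), and \(H\cdot R\le H\cdot\beta\le N\) because \(H\) is ample, so every image component has positive degree.

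The main obstacle is the bookkeeping that makes the parameter space genuinely proper and of finite type. That requires the finiteness of the relevant curve classes, and either properness of the stable-map stacks in arbitrary characteristic, which holds for projective targets, or the Chow variety substitute. Degenerate limits are already handled by the root-component argument.
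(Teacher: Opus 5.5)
Your proposal is correct and follows essentially the same route as the paper. Both arguments take a finite union of proper one-pointed genus-zero Kontsevich stacks, cut out the closed vertical locus, form the incidence fibre product with \(\sigma\), get a closed image by properness, and prove the reverse inclusion via the root-component argument of Lemma~\ref{lem:marked-tree-root-component}. The only difference is cosmetic: the paper indexes by degree \(1\le e\le aN\) with respect to a very ample multiple \(A=aH\), and detects verticality by the vanishing of the \(\pi^*L\)-degree for an ample \(L\) on \(B\). This sidesteps the finiteness-of-curve-classes bookkeeping that you flag.
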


\begin{proof}
If \(N=0\), then \(Z_{\le N}=\varnothing\), because an ample divisor has positive degree on every nonconstant curve.
Assume \(N\ge 1\).
Choose \(a>0\) such that \(A=aH\) is very ample.
For \(1\le e\le aN\), let \(\overline M_{0,1}(W,e)\) be the Kontsevich stack of one-pointed genus-zero stable maps to \(W\) of \(A\)-degree \(e\).
The finite union
\[
\overline M_{\le aN}(W)\;:=\;\coprod_{1\le e\le aN}\overline M_{0,1}(W,e)
\]
is a finite disjoint union of proper algebraic stacks.

The properness input is the algebraic Kontsevich stable-map properness theorem; we use the formulation in~\cite[Theorem~1]{FP97}: if \(Y\) is a projective scheme of finite type over a field, then the moduli stack of stable maps into \(Y\) with uniformly bounded genus and degree is a proper algebraic stack.
Applying this with \(Y=W\), genus \(0\), one marking, and \(A\)-degree bounded by \(aN\) gives the proper finite-type stack above; in particular it is universally closed over \(k\).

Choose an ample Cartier divisor \(L\) on \(B\).
In a family of stable maps to \(W\), the degree of the pullback of \(\pi^*L\) on the source fibers is locally constant; hence the locus where this degree is zero is closed.
Since \(L\) is ample, this is exactly the condition that every irreducible component of the composed stable map to \(B\) is contracted.
Let \(\overline M_{\le aN}^{\pi\text{-vert}}(W)\) denote this closed proper substack.
The fibered product
\[
I_{\le N}\;:=\;\overline M_{\le aN}^{\pi\text{-vert}}(W)\times_{W,\operatorname{ev},\sigma}\Gamma
\]
is proper over \(\Gamma\), so its image is closed because proper algebraic stacks are universally closed over schemes.

The image is exactly \(Z_{\le N}\).
A vertical rational curve \(R\) through \(\sigma(a_0)\) with \(H\cdot R\le N\) gives a point of \(I_{\le N}\) after normalizing \(R\) and marking a preimage of \(\sigma(a_0)\).
Conversely, a point of \(I_{\le N}\) gives a vertical genus-zero stable map of positive \(A\)-degree at most \(aN\) whose marked point maps to \(\sigma(a_0)\).
If the marked component is contracted, root the dual tree at it and take the first component adjacent to the maximal subtree contracted to \(\sigma(a_0)\); this component is nonconstant and its image contains \(\sigma(a_0)\).
Thus some vertical rational component \(R\) through \(\sigma(a_0)\) has \(A\cdot R\le aN\), hence \(H\cdot R\le N\).
Therefore \(Z_{\le N}\) is closed.
\end{proof}

\subsection{Integer degree discreteness and dimension counts}

\begin{lem}\label{lem:integer-degree-discreteness}
Let \(H\) be a Cartier divisor on a projective variety \(W\).
Let \(N\in\mathbb R\), and let \(N_m\to N\) be a sequence of real numbers.
If, for infinitely many \(m\), a curve \(R_m\subset W\) satisfies \(H\cdot R_m\le N_m\), then for all sufficiently large such \(m\),
\[
H\cdot R_m\;\le\;N.
\]
\end{lem}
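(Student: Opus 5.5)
The argument rests on integrality. For a Cartier divisor \(H\) on \(W\) and an irreducible curve \(R\subset W\), the intersection number \(H\cdot R=\deg(\Oo_W(H)|_R)\) is an integer. The plan is therefore to compare the integers \(H\cdot R_m\) with the real bound \(N\), using only that \(N_m\to N\).

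First, I will fix \(\varepsilon>0\) so that the half-open interval \((N,N+\varepsilon)\) contains no integer. One explicit choice is \(\varepsilon=\lfloor N\rfloor+1-N\). This is strictly positive, and it equals \(1\) when \(N\) is an integer. Since \(N_m\to N\), there is \(m_0\) such that \(N_m<N+\varepsilon=\lfloor N\rfloor+1\) for all \(m\ge m_0\).

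Second, take any index \(m\ge m_0\) among the infinitely many for which \(R_m\) exists. We then have
\[
H\cdot R_m\;\le\;N_m\;<\;\lfloor N\rfloor+1 .
\]
Because \(H\cdot R_m\) is an integer, this forces \(H\cdot R_m\le\lfloor N\rfloor\le N\), which is the claim. If \(R_m\) is a curve that is not necessarily irreducible, I will read it as an effective \(1\)-cycle with integer coefficients. Its intersection number is then a sum of integers, so it is still an integer and the argument is unchanged.

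There is essentially no obstacle here. The only point to state explicitly is the integrality of \(H\cdot R\), which requires \(H\) to be Cartier. For a \(\mathbb Q\)-Cartier \(H\) one would instead multiply by the Cartier index \(a\) and run the same argument with \(aH\), the bound \(aN\), and the sequence \(aN_m\).
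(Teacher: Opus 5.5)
Your proposal is correct and is essentially the paper's own argument: \(H\cdot R_m\) is an integer because \(H\) is Cartier, \(N_m<\lfloor N\rfloor+1\) for all large \(m\), and together these force \(H\cdot R_m\le\lfloor N\rfloor\le N\). Your added remarks on reducible curves, viewed as effective \(1\)-cycles, and on the \(\mathbb Q\)-Cartier case are harmless extensions, and the interval \((N,N+\varepsilon)\) is open rather than half-open, which does not affect the argument.
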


\begin{proof}
For every integral curve \(R\subset W\), the intersection number \(H\cdot R\) is an integer.
For \(m\) large enough, \(N_m<\lfloor N\rfloor+1\).
If \(H\cdot R_m\le N_m\), then the integer \(H\cdot R_m\) is at most \(\lfloor N\rfloor\), hence is at most \(N\).
\end{proof}

\begin{lem}\label{lem:relative-fixed-point-dimension}
In Set-up~\ref{setup:relative-source}, assume further that \(\pi\) is flat with fibers of dimension \(s\).
Let \(M\) be an irreducible locally closed family of \(B\)-morphisms \(g\colon \Gamma\to W\).
If \(a_1,\ldots,a_k\in \Gamma\) are distinct and the fixed-point locus
\[
M(x_1,\ldots,x_k)\;=\;\{g\in M\mid g(a_i)=x_i\text{ for all }i\}
\]
is nonempty for points \(x_i\in W_{h(a_i)}\), then this fixed-point locus has an irreducible component of dimension at least \(\dim M-ks\).
In particular, if \(\dim M\ge k(s+1)\), it contains a \(k\)-dimensional locally closed subfamily.
\end{lem}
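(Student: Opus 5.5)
The plan is to realize \(M(x_1,\ldots,x_k)\) as a fiber of an evaluation morphism and bound its dimension from below using the dimension of the target. Consider the evaluation morphism
\[
\operatorname{ev}\colon M\to W\times_B\cdots\times_B W\quad(\text{more precisely into } W_{h(a_1)}\times\cdots\times W_{h(a_k)}),\qquad g\mapsto (g(a_1),\ldots,g(a_k)).
\]
Since every \(g\in M\) is a \(B\)-morphism, \(\pi(g(a_i))=h(a_i)\) is independent of \(g\). Hence \(\operatorname{ev}\) lands in the fixed product \(P:=W_{h(a_1)}\times\cdots\times W_{h(a_k)}\). By the flatness hypothesis, each fiber \(W_{h(a_i)}\) has pure dimension \(s\) (as in Lemma~\ref{lem:integral-curve-base-flatness}), so \(\dim P=ks\). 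Evaluation at a fixed point of \(\Gamma\) is a morphism on the Hom scheme, and restricting to the locally closed \(M\) gives a morphism of finite-type schemes. The fixed-point locus is then exactly the scheme-theoretic fiber \(\operatorname{ev}^{-1}(x_1,\ldots,x_k)\).

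For the first assertion, replace \(P\) by the closure \(\overline{\operatorname{ev}(M)}\), which is irreducible because \(M\) is. Its dimension is \(\dim\overline{\operatorname{ev}(M)}\le ks\). The standard upper semicontinuity/fiber-dimension theorem applies to a morphism from an irreducible variety: every irreducible component of every nonempty fiber has dimension at least \(\dim M-\dim\overline{\operatorname{ev}(M)}\) (e.g.\ \cite[Tag~0B2L]{Stacks} or Hartshorne II Ex.~3.22). This bound holds at every point of the image, not just generic ones, because it comes from Krull's principal ideal theorem: the point \((x_1,\ldots,x_k)\) is cut out locally on the irreducible target by \(\dim\overline{\operatorname{ev}(M)}\) equations. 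Hence some (indeed every) component of \(M(x_1,\ldots,x_k)\) has dimension \(\ge\dim M-ks\). The only care needed is that \(M\) may be non-reduced. We pass to \(M_{\rm red}\), which changes neither dimension nor the underlying set of the fiber.

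For the second assertion, if \(\dim M\ge k(s+1)\), then this component \(Z\) has \(\dim Z\ge k\). Choose any irreducible locally closed subvariety of \(Z\) of dimension exactly \(k\), for instance by intersecting an affine open of \(Z\) with \(\dim Z-k\) general hypersurfaces. This gives the required \(k\)-dimensional locally closed subfamily, all of whose members satisfy \(g(a_i)=x_i\).

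I expect no serious obstacle. The one step to state carefully is that the lower bound on fiber dimension holds for every nonempty fiber, not just over a dense open set. If the cited form is inconvenient, I would argue directly: shrink to an affine neighborhood of a point of the fiber, and note that \((x_1,\ldots,x_k)\) is defined set-theoretically in the irreducible target by \(d=\dim\overline{\operatorname{ev}(M)}\) functions after localizing at the point (a system of parameters). Pulling these back to \(M\) and applying Krull then gives every component of the fiber through that point codimension \(\le d\).
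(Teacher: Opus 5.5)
Your proposal is correct and follows the same route as the paper: evaluation at $a_1,\ldots,a_k$ gives a morphism $M\to W_{h(a_1)}\times\cdots\times W_{h(a_k)}$, whose target has dimension $ks$ by flatness, and the lower bound on fiber dimensions then gives the claim. Your extra steps only spell out what the paper calls the ``standard dimension estimate'': replacing the target by the closure of the image, checking via Krull that the bound holds at every nonempty fiber, and passing to $M_{\rm red}$.
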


\begin{proof}
The evaluation morphism factors as
\[
M\to W_{h(a_1)}\times\cdots\times W_{h(a_k)}.
\]
Flatness of \(\pi\) gives \(\dim W_{h(a_i)}=s\), so the target has dimension \(ks\).
A nonempty fiber of a morphism from an irreducible finite-type family to a target of dimension \(ks\) has some irreducible component of dimension at least \(\dim M-ks\).
This standard dimension estimate proves the claim.
\end{proof}

\subsection{Density at a general point from many markings}

\begin{lem}\label{lem:general-point-density-from-many-markings}
Let \(\Gamma\) be an irreducible smooth projective curve over an algebraically closed field, let \(W\) be a projective variety, let \(\pi\colon W\to B\) be a projective morphism to a projective variety \(B\), let \(\sigma\colon \Gamma\to W\) be a morphism, and let \(H\) be an ample Cartier divisor on \(W\).
Let \(N\in\mathbb R\).
Suppose that for arbitrarily large integers \(k\), there exists a dense open subset \(U_k\subset \Gamma^k\) with the following property: for every tuple \((a_1,\ldots,a_k)\in U_k\), at least one point \(\sigma(a_i)\) lies on a rational curve \(R\subset W\) such that \(H\cdot R\le N\) and \(\pi(R)\) is a point.
Then a general point of \(\sigma(\Gamma)\) lies on a rational curve \(R\) such that \(H\cdot R\le N\) and \(\pi(R)\) is a point.
The same conclusion holds if the displayed bound is first obtained as \(H\cdot R\le N_m\) for a sequence \(N_m\to N\).
\end{lem}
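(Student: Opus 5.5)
The natural approach is to use the closedness from Lemma~\ref{lem:bounded-vertical-incidence-closed} and argue by contradiction. First reduce to the case of a fixed integer bound. If the hypothesis only gives \(H\cdot R\le N_m\) with \(N_m\to N\), then Lemma~\ref{lem:integer-degree-discreteness} shows that for all sufficiently large indices the same curves satisfy \(H\cdot R\le N\). Since \(H\cdot R\) is an integer, we may then replace \(N\) by the integer \(N':=\lfloor N\rfloor\), or by \(0\) if \(N<0\). The hypothesis then holds with \(N'\) for arbitrarily large \(k\).

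Next set \(Z:=Z_{\le N'}\subset\Gamma\), the locus of points \(a\) such that \(\sigma(a)\) lies on a \(\pi\)-vertical rational curve of \(H\)-degree at most \(N'\). By Lemma~\ref{lem:bounded-vertical-incidence-closed}, applied with the curve \(\Gamma\) and the morphism \(\sigma\), the set \(Z\) is closed in \(\Gamma\). Because \(\Gamma\) is an irreducible curve, either \(Z=\Gamma\) or \(Z\) is a finite set of points. If \(Z=\Gamma\), the conclusion holds for every point of \(\sigma(\Gamma)\), in particular for a general one, and we are done.

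Suppose instead that \(Z\) is finite, and take any \(k\) for which \(U_k\) exists. The set of tuples \((a_1,\ldots,a_k)\in\Gamma^k\) having at least one coordinate in \(Z\) is
\[
\bigcup_{i=1}^k \operatorname{pr}_i^{-1}(Z).
\]
This is a finite union of proper closed subsets of the irreducible variety \(\Gamma^k\), so its complement \(V\) is dense open. Then \(U_k\cap V\) is nonempty, since both are dense open in an irreducible space. Any tuple in \(U_k\cap V\) has, by hypothesis, some index \(i\) with \(a_i\in Z\), which contradicts membership in \(V\). Hence \(Z=\Gamma\).

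There is no serious obstacle here; the only care needed is in the reduction step. The curves produced for varying \(k\) must satisfy a single fixed degree bound before the closed set \(Z\) is defined, which is exactly what integrality (Lemma~\ref{lem:integer-degree-discreteness}) provides. The argument also needs only one sufficiently large \(k\), so the phrase ``arbitrarily large'' is used only to make the sequence version \(N_m\to N\) work.
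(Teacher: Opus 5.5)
Your proposal is correct and is essentially the paper's proof: you reduce to the integer bound \(\lfloor N\rfloor\) (using integrality and Lemma~\ref{lem:integer-degree-discreteness} for the sequence version), invoke closedness of \(Z_{\le \lfloor N\rfloor}\) from Lemma~\ref{lem:bounded-vertical-incidence-closed}, and note that if \(Z\neq\Gamma\), the nonempty open set \((\Gamma\setminus Z)^k\) (your \(V\)) must meet \(U_k\), a contradiction. The only cosmetic difference is the case \(N<1\): the paper observes that the hypothesis is then vacuous, whereas you fold it into the replacement \(N'=0\), which works equally well since \(Z_{\le 0}=\varnothing\).
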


\begin{proof}
Since \(H\) is Cartier and ample, \(H\cdot R\) is a positive integer on every nonconstant curve.
If \(N<1\), the dense-open hypothesis is impossible, so the statement is vacuous; hence assume \(N\ge 1\).
Then \(H\cdot R\le N\) is equivalent to \(H\cdot R\le N_0\), where \(N_0=\lfloor N\rfloor\).
By Lemma~\ref{lem:bounded-vertical-incidence-closed} applied with the integer bound \(N_0\), the locus \(Z_{\le N_0}\subset\Gamma\) of points whose image lies on a \(\pi\)-vertical rational curve of \(H\)-degree at most \(N_0\) is closed.
If \(Z_{\le N_0}\ne\Gamma\), then \((\Gamma\setminus Z_{\le N_0})^k\) is a nonempty open subset of \(\Gamma^k\) and must meet the dense open set \(U_k\), contradicting the defining property of \(U_k\).
Thus \(Z_{\le N_0}=\Gamma\).

For a sequence \(N_m\to N\), Lemma~\ref{lem:integer-degree-discreteness} discards finitely many terms and converts the \(N_m\)-bound into the \(N\)-bound; if the limiting \(N<1\), the large-index hypothesis is impossible by the same positivity argument, and otherwise the preceding closed-locus argument applies.
\end{proof}

\section{Relative bend-and-shatter and the characteristic-\texorpdfstring{\(p\)}{p} bound}\label{sec:char-p-BB}

\subsection{The relative JLR shattering lemma}

\begin{lem}\label{lem:relative-jlr-proposition-22}
In Set-up~\ref{setup:relative-source}, assume further that \(\pi\) is flat with fibers of dimension \(s\) and that \(\Gamma\) has genus \(g\).
Let \(M\subset\Mor_B(\Gamma,W;h)\) be an irreducible locally closed family of \(B\)-morphisms \(u\colon \Gamma\to W\), and let \(f\in M\).
Put
\[
k_0\;=\;\left\lfloor\frac{\dim M}{s+1}\right\rfloor.
\]
Assume \(k_0>0\) and \(2g-2+k_0>0\).
If \(H\) is an ample Cartier divisor and all maps in \(M\) have \(H\)-degree \(e=H\cdot f_*\Gamma\), then for a general choice of distinct points \(a_1,\ldots,a_{k_0}\in \Gamma\), the stable-map closure of \(M\) over the fixed stable map \(h\colon \Gamma\to B\) contains \(k_0\) nonconstant \(\pi\)-vertical rational curves \(R_i\), with \(R_i\) passing through \(f(a_i)\), and
\[
\sum_iH\cdot R_i\;\le\;e.
\]
In particular one \(R_i\) has \(H\cdot R_i\le e/k_0\).
\end{lem}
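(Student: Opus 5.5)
We follow the proof of~\cite[Lemma~2.1]{JLR25b}, replacing the absolute stable-map stack by the relative fiber of Lemma~\ref{lem:vertical-stable-map-fiber-closure}. The plan has three steps: cut down to a $k_0$-dimensional family with fixed marked images, break it inductively one marked point at a time, and then add up degrees.

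\emph{Step 1: the cut-down family.} Choose general distinct points $a_1,\ldots,a_{k_0}\in\Gamma$ and set $x_i=f(a_i)\in W_{h(a_i)}$. Since $\dim M\ge k_0(s+1)$, Lemma~\ref{lem:relative-fixed-point-dimension} produces a locally closed subfamily $S\subset M(x_1,\ldots,x_{k_0})$ of dimension $k_0$. Because $f$ is general with respect to the $a_i$, we may take $S$ through $f$, after replacing $f$ by a general member of $M$ if needed; the conclusion only involves $f(a_i)$ and the degree $e$, and both are controlled by the generality of the choices.

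\emph{Step 2: inductive breaking.} The hypothesis $2g-2+k_0>0$ makes $(\Gamma,a_1,\ldots,a_{k_0})$ a stable marked curve. Hence $S$ embeds in $\overline M_{g,k_0}(W,\beta)$, and its closure $\overline S^{\rm rel}$ lies in the fiber over $h$, which is proper. We then run the JLR induction. First note that a family of morphisms from a fixed stable marked curve with all marked images fixed cannot be complete of positive dimension; this is the rigidity lemma, using Mori's argument that a complete curve of maps $\Gamma\to W$ with a fixed point must break. Take a complete curve $T\subset\overline S^{\rm rel}$ through a general point. Its boundary contains a stable map whose source has a rational tree $T_1$ attached at $a_1$, after reordering, and $T_1$ is not contracted to $x_1$.

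For the next round, cut the boundary stratum by the remaining $k_0-1$ conditions. We keep track of the dimension count exactly as in~\cite[Lemma~2.1]{JLR25b}: each breaking drops the dimension by at most one, and the relevant boundary locus over a general point of the remaining family has dimension at least $k_0-1$. This is the step I expect to be the main obstacle. One has to check that the JLR dimension bookkeeping goes through inside the relative fiber, with $\pi$-flat fibers of dimension $s$ replacing $\dim X$, and that the bubbles appear at distinct marked points. Iterating produces a limit stable map $v\colon\Gamma'\to W$ whose stabilization $\Gamma'\to\Gamma$ contracts disjoint rational trees $T_i$ over $a_i$, with $v(T_i)\ni x_i$ and $v|_{T_i}$ not constant at $x_i$.

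\emph{Step 3: verticality and degrees.} By Lemma~\ref{lem:vertical-stable-map-fiber-closure}, every non-contracted component of each $v(T_i)$ is $\pi$-vertical. Lemma~\ref{lem:marked-tree-root-component} then gives a component $R_i\subset T_i$ with $v(R_i)$ a nonconstant rational curve through $f(a_i)$. The $H$-degree is constant in the flat family of stable maps, so it equals $e$ on $v$. The trees $T_i$ are disjoint, and the degree on all components is nonnegative since $H$ is ample, so
\[
\sum_i H\cdot v(R_i)\le\sum_i H\cdot v_*T_i\le e.
\]
Pigeonholing over the $k_0$ curves gives one $R_i$ with $H\cdot R_i\le e/k_0$.
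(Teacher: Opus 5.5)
Your Steps 1 and 3 are the paper's proof. The paper uses the same fixed-point count with fiber dimension \(s\) (Lemma~\ref{lem:relative-fixed-point-dimension}), verticality from Lemma~\ref{lem:vertical-stable-map-fiber-closure}, extraction of \(R_i\) via Lemma~\ref{lem:marked-tree-root-component}, and the same degree sum and pigeonhole.

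The only divergence is Step 2. There you try to rerun the JLR induction inside the relative fiber, and you flag the dimension bookkeeping as the main obstacle. That obstacle does not exist, and the paper redoes nothing. \cite[Lemma~2.1]{JLR25b} concerns a \(k\)-dimensional family of stable maps from a fixed stable marked curve with the first \(k\) marked images held fixed. It makes no reference to the dimension of the target. The fiber dimension \(s\) is used up entirely in Step 1, where it produces the \(k_0\)-dimensional family \(S\); it plays no further role. So you simply apply the JLR lemma verbatim to \(S\subset\overline M_{g,k_0}(W,\beta)\) in the absolute stack. Its limit lies in the closure of \(S\). Since \(S\) lies in the fiber of \(\overline M_{g,k_0}(W,\beta)\to\overline M_{g,k_0}(B,\pi_*\beta)\) over \([h]\), and that fiber is closed, the limit lies in \(\overline S^{\rm rel}\) automatically. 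Lemma~\ref{lem:vertical-stable-map-fiber-closure} then applies. That the trees sit at distinct markings is part of the JLR conclusion (one non-contracted tree at each fixed marking), so there is nothing further to check.

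Separately, drop the clause about replacing \(f\) by a general member of \(M\). The statement needs curves through \(f(a_i)\) for the given \(f\). So does its use in Lemma~\ref{lem:bm-jlr-relative-positive-characteristic}, where \(f=f_m\) is the Frobenius-twisted section. Such a replacement would therefore change the conclusion. It is also unnecessary. The fiber of \(M\to\prod_i W_{h(a_i)}\) over \((f(a_i))_i\) is nonempty, since it contains \(f\). By the fiber-dimension theorem for a dominant morphism from the irreducible \(M\) onto the closure of its image, every irreducible component of this fiber has dimension at least \(\dim M-k_0s\ge k_0\). In fact \(S\) need not contain \(f\) at all; it only needs marked images \(f(a_i)\).
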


\begin{proof}
For general distinct \(a_i\), the marked curve \((\Gamma,a_1,\ldots,a_{k_0})\) is stable because \(2g-2+k_0>0\).
Evaluation at \(a_i\) lands in the fiber \(W_{h(a_i)}\), of dimension \(s\).
Hence fixing \(u(a_i)=f(a_i)\) imposes codimension at most \(s\), and the common fixed locus has a locally closed component of dimension at least \(\dim M-k_0s\ge k_0\).
Choose a \(k_0\)-dimensional subfamily \(S\) of this fixed locus.

Map \(S\) to the Kontsevich stable-map stack with the fixed marked source and take its closure inside the fiber over the fixed stable map \(h\colon \Gamma\to B\).
By JLR~\cite[Lemma~2.1]{JLR25b}, a \(k\)-dimensional locally closed family of stable maps from a fixed stable marked curve, fixing the first \(k\) marked images, has a stable-map limit whose stabilization is the original marked curve and which contains a non-contracted rational tree at each fixed marking.
Applying this to \(S\) gives rational trees \(T_i\) over the \(a_i\), not contracted by the stable map and containing the fixed points \(f(a_i)\).

Lemma~\ref{lem:vertical-stable-map-fiber-closure} makes every non-contracted component of each \(T_i\) \(\pi\)-vertical, since the closure was taken over the fixed stable map to \(B\).
Lemma~\ref{lem:marked-tree-root-component} extracts from each \(T_i\) an irreducible nonconstant rational component whose image contains \(f(a_i)\).
Call these curves \(R_i\).

The \(T_i\) are exceptional trees over distinct markings, so the chosen components are distinct.
The stable limit has the same \(H\)-degree \(e\) as the maps in \(M\), and all effective nonconstant components have nonnegative \(H\)-degree.
Thus \(\sum_iH\cdot R_i\le e\), and the pigeonhole bound follows.
\end{proof}

\subsection{The characteristic-\texorpdfstring{\(p\)}{p} BM/JLR bound}

\begin{lem}\label{lem:bm-jlr-relative-positive-characteristic}
In Set-up~\ref{setup:relative-source}, assume further that \(k\) has characteristic \(p>0\), that \(\pi\) is flat of relative dimension \(s\) and smooth in a neighborhood of the image of \(f\).
In the relative setup of~\cite[Proposition~3.1.1]{BM16}, let \(f\colon \Gamma\to W\) be a \(B\)-morphism with finite composition \(h:=\pi\circ f\colon \Gamma\to B\).
Let \(H\) be an ample Cartier divisor on \(W\), and assume
\[
\delta\;=\;\deg_\Gamma f^*T_{W/B}\;>\;0.
\]
Then through a general point of \(f(\Gamma)\) there exists a rational curve \(R\subset W\), contracted by \(\pi\), such that
\[
H\cdot R\;\le\;(s+1)\,\frac{H\cdot f_*\Gamma}{\delta}.
\]
\end{lem}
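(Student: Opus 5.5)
We pass to Frobenius pullbacks to inflate the deformation space, apply the relative shattering Lemma~\ref{lem:relative-jlr-proposition-22} to each pullback, and then let the Frobenius exponent go to infinity using Lemma~\ref{lem:general-point-density-from-many-markings}.

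\emph{Frobenius pullbacks.} For each \(m\ge 1\) set \(q_m=p^m\) and \(\Gamma_m=\Gamma^{(p^{-m})}\), with geometric Frobenius \(\Phi_m\colon\Gamma_m\to\Gamma\). Put \(f_m=f\circ\Phi_m\) and \(h_m=h\circ\Phi_m\). Then \(f_m\) is a \(B\)-morphism \(\Gamma_m\to W\) over \(h_m\). Since Frobenius twisting does not change the genus, \(g(\Gamma_m)=g\). By Lemma~\ref{lem:frobenius-points-and-degrees},
\[
\deg f_m^*T_{W/B}=q_m\delta
\quad\text{and}\quad
H\cdot(f_m)_*\Gamma_m=q_m\,H\cdot f_*\Gamma.
\]
Let \(M_m\) be an irreducible component of \(\Mor_B(\Gamma_m,W;h_m)\) through \(f_m\).

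\emph{Dimension estimate.} Because \(\pi\) is smooth near \(f_m(\Gamma_m)\), the relative Hom scheme has tangent space \(H^0(\Gamma_m,f_m^*T_{W/B})\) and obstructions in \(H^1\) at \(f_m\). This is the relative tangent--obstruction calculation of~\cite[Proposition~3.1.1]{BM16}, equivalently the standard relative Hom deformation theory of~\cite{Kol96}. It gives
\[
\dim M_m\ \ge\ \chi(f_m^*T_{W/B})=q_m\delta+s(1-g).
\]
I expect this to be the main point requiring care. It needs only smoothness of \(\pi\) along the image, not along all of \(W\), and the bound must be uniform in \(m\). The growth of \(\dim M_m\) linearly in \(q_m\) is exactly what positive characteristic buys.

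\emph{Shattering.} Set \(k_m=\lfloor\dim M_m/(s+1)\rfloor\). Since \(\delta>0\), \(k_m\to\infty\), so for large \(m\) we have \(k_m>0\) and \(2g-2+k_m>0\). Every map in \(M_m\) has \(H\)-degree \(q_m\,H\cdot f_*\Gamma\). Lemma~\ref{lem:relative-jlr-proposition-22} then gives, for a general tuple \((a_1,\dots,a_{k_m})\in\Gamma_m^{k_m}\), some \(i\) and a \(\pi\)-vertical rational curve through \(f_m(a_i)\) with
\[
H\cdot R\ \le\ N_m:=\frac{q_m\,H\cdot f_*\Gamma}{k_m}.
\]
Since \(k_m\ge (q_m\delta+s(1-g))/(s+1)-1\), we get \(N_m\to N:=(s+1)H\cdot f_*\Gamma/\delta\).

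\emph{Transfer back to \(\Gamma\).} By Lemma~\ref{lem:frobenius-points-and-degrees}, \(\Phi_m\) identifies closed points, and \(f_m(a)=f(\Phi_m(a))\). The product \(\Phi_m^{k_m}\) is a homeomorphism \(\Gamma_m^{k_m}\to\Gamma^{k_m}\), so the images of the dense opens \(U_{k_m}\subset\Gamma_m^{k_m}\) are dense opens of \(\Gamma^{k_m}\) with the same property for \(\sigma=f\). Lemma~\ref{lem:general-point-density-from-many-markings}, applied with the sequence \(N_m\to N\), then yields a \(\pi\)-contracted rational curve \(R\) through a general point of \(f(\Gamma)\) with \(H\cdot R\le N\), as claimed.
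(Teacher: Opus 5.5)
Your proposal is correct and follows essentially the same route as the paper: Frobenius pullback $f_m=f\circ\Phi_m$, the relative tangent--obstruction bound $\dim M_m\ge q_m\delta+s(1-g)$, the relative shattering Lemma~\ref{lem:relative-jlr-proposition-22} with $k_m=\lfloor \dim M_m/(s+1)\rfloor$, and then transfer to a general point of $f(\Gamma)$ via the homeomorphism $\Phi_m$ and Lemma~\ref{lem:general-point-density-from-many-markings}. One small precision: $\dim M_m$ can exceed $\chi$, so you only get $\limsup N_m\le N$ rather than $N_m\to N$ (the paper correspondingly writes $q_m/k_m\le (s+1)/\delta+\eta$ for large $m$); this still suffices, since you can replace $N_m$ by the explicit upper bound coming from $k_m\ge (q_m\delta+s(1-g))/(s+1)-1$, which does converge to $N$.
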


\begin{proof}
Let \(\Gamma_m=\Gamma^{(p^{-m})}\) be the inverse Frobenius twist and let \(\Phi_m\colon \Gamma_m\to \Gamma\) be the geometric Frobenius, a finite \(k\)-morphism of degree \(q_m=p^m\).
Set \(f_m=f\circ\Phi_m\colon \Gamma_m\to W\) and \(h_m=\pi\circ f_m\colon \Gamma_m\to B\).
Lemma~\ref{lem:frobenius-points-and-degrees} identifies closed points of \(\Gamma_m\) and \(\Gamma\), gives \(f_m(a_m)=f(\Phi_m(a_m))\), and scales degrees by \(q_m\); in particular
\[
\deg_{\Gamma_m}f_m^*T_{W/B}\;=\;q_m\delta,\qquad H\cdot(f_m)_*\Gamma_m\;=\;q_m(H\cdot f_*\Gamma).
\]
Let \(M_m\subset\Hom_B(\Gamma_m,W;h_m)\) be an irreducible component through \(f_m\).
The relative tangent-obstruction calculation of~\cite[Proposition~3.1.1]{BM16} replaces \(T_W\) by \(T_{W/B}\): the tangent space to \(\Hom_B(\Gamma_m,W;h_m)\) at \(f_m\) is \(H^0(\Gamma_m,f_m^*T_{W/B})\), the obstruction lies in \(H^1(\Gamma_m,f_m^*T_{W/B})\), and every component through \(f_m\) has dimension at least \(h^0-h^1\).
Hence
\[
\dim M_m\;\ge\;\chi(\Gamma_m,f_m^*T_{W/B})\;=\;q_m\delta+s(1-g(\Gamma_m))\;=\;q_m\delta+s(1-g(\Gamma)).
\]

Put \(k_m=\lfloor\dim M_m/(s+1)\rfloor\).
For \(m\gg 0\), \(k_m>0\) and \(2g(\Gamma_m)-2+k_m>0\).
On the irreducible Hom component \(M_m\), the \(H\)-degree is constant: the universal map over \(\Gamma_m\times M_m\) pulls back \(\Oo_W(H)\) to a line bundle whose degree on the \(\Gamma_m\)-fibers is locally constant.
At \(f_m\) this degree is \(H\cdot(f_m)_*\Gamma_m=q_m(H\cdot f_*\Gamma)\).
Apply Lemma~\ref{lem:relative-jlr-proposition-22} to \(M_m\).
Its input is exactly the fixed-base relative Hom family: fixing points costs at most the fiber dimension \(s\), bend-and-shatter produces rational trees at the fixed markings, and closure over the fixed stable map to \(B\) makes their non-contracted components \(\pi\)-vertical.
Thus for a general \(k_m\)-tuple \(a_{m,1},\ldots,a_{m,k_m}\in \Gamma_m\), the stable limit contains \(k_m\) vertical nonconstant rational curves through the fixed marked images \(f_m(a_{m,i})\), and one of them satisfies
\[
H\cdot R_m\;\le\;\frac{q_m(H\cdot f_*\Gamma)}{k_m}.
\]
The dimension lower bound gives
\[
k_m\;\ge\;\frac{q_m\delta+s(1-g(\Gamma))}{s+1}-1.
\]
Consequently, for every \(\eta>0\) and all sufficiently large \(m\),
\[
\frac{q_m}{k_m}\;\le\;\frac{s+1}{\delta}+\eta.
\]
Thus the right side in the preceding degree estimate is at most
\[
\left(\frac{s+1}{\delta}+\eta\right)(H\cdot f_*\Gamma)
\]
for all large \(m\).
Since $H\cdot R_m$ are integers and \(\eta>0\) is arbitrary, Lemma~\ref{lem:integer-degree-discreteness} gives, for sufficiently large \(m\), a vertical rational component satisfying the stated numerical bound.

Lemma~\ref{lem:general-point-density-from-many-markings} then turns the construction from \(k_m\) general markings into a curve through a general point of \(f(\Gamma)\): the universal homeomorphism \(\Phi_m\colon \Gamma_m\to \Gamma\) sends general tuples on \(\Gamma_m\) to general tuples on \(\Gamma\), and Lemma~\ref{lem:frobenius-points-and-degrees} gives \(f_m(a_{m,i})=f(\Phi_m(a_{m,i}))\).
If the bounded-curve locus did not dominate \(\Gamma\), then a general \(k_m\)-tuple of \(\Gamma\) could avoid it, contradicting the preceding construction transported from \(\Gamma_m\).
\end{proof}

\section{Characteristic-zero spreading and the optimal bend-and-break bound}\label{sec:spreading}

\subsection{Characteristic-zero vertical spreading}

\begin{lem}\label{lem:characteristic-zero-vertical-spreading}
Let \(S\) be an integral finite-type \(\mathbb Z\)-scheme with characteristic-zero generic point.
Let \(\mathcal X\), \(\mathcal B\), and \(\mathcal C\) be projective \(S\)-schemes, where \(\mathcal B\to S\) and \(\mathcal C\to S\) are families of smooth projective curves, and let
\[
\pi\colon\mathcal X\to\mathcal B,\qquad \sigma\colon\mathcal C\to\mathcal X
\]
be \(S\)-morphisms with \(\pi\circ\sigma\colon\mathcal C\to\mathcal B\) prescribed.
Let \(\mathcal H\) be a relatively ample Cartier divisor on \(\mathcal X/S\), and let \(N\ge 1\) be an integer.
If for a Zariski-dense set of closed points \(s\in S\) a general point of \(\sigma_s(\mathcal C_s)\) lies on a rational curve \(R_s\subset\mathcal X_s\) contracted by \(\pi_s\) and satisfying \(\mathcal H_s\cdot R_s\le N\), then the same statement holds over the characteristic-zero generic fiber, and hence after base change along any embedding of the function field of \(S\) into \(\mathbb C\).
\end{lem}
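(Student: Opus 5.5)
The plan is to realize the bounded vertical rational-curve locus as the image of a proper relative moduli stack over \(S\), and then use constructibility of images together with Zariski density of the good closed points. This is the relative analogue of Lemma~\ref{lem:bounded-vertical-incidence-closed}.

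First, replace \(\mathcal H\) by a multiple \(\mathcal A=a\mathcal H\) that is relatively very ample over \(S\); after shrinking \(S\) to a dense open we may also assume flatness of all schemes involved. Form the relative Kontsevich stacks \(\overline M_{0,1}(\mathcal X/S,e)\) for \(1\le e\le aN\). By the relative form of~\cite[Theorem~1]{FP97}, these are proper over \(S\). Next fix an \(S\)-ample divisor \(\mathcal L\) on \(\mathcal B\). The condition that the pullback of \(\pi^*\mathcal L\) has degree zero is open and closed, and it cuts out the closed substack \(\overline M^{\pi\text{-vert}}_{\le aN}\) of \(\pi\)-vertical stable maps. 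Set
\[
\mathcal I:=\overline M^{\pi\text{-vert}}_{\le aN}\times_{\mathcal X,\operatorname{ev},\sigma}\mathcal C.
\]
This stack is proper over \(\mathcal C\), so its image \(\mathcal Z\subset\mathcal C\) is closed. The formation of \(\mathcal I\) commutes with base change to fibers over \(S\), so \(\mathcal Z_s\) is the image of \(\mathcal I_s\). By the tree-extraction argument already used in Lemma~\ref{lem:bounded-vertical-incidence-closed} (see Lemma~\ref{lem:marked-tree-root-component}), \(\mathcal Z_s\) is exactly the locus \(Z_{\le N}\) of Lemma~\ref{lem:bounded-vertical-incidence-closed} for the fiber over \(s\). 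The same identification holds over the generic point and over any geometric point.

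The hypothesis says that \(\mathcal Z_s=\mathcal C_s\) for a Zariski-dense set \(\Sigma\) of closed points \(s\in S\), since the locus is closed and contains a general point of the irreducible curve \(\mathcal C_s\). (If the fibers \(\mathcal C_s\) are not irreducible, we first restrict to the relevant component; I take them to be geometrically irreducible.) Now \(\mathcal Z\) is closed in \(\mathcal C\), which is flat over \(S\) with curve fibers. Suppose \(\mathcal Z\) did not contain the generic fiber \(\mathcal C_\eta\). Then \(\mathcal Z\cap\mathcal C_\eta\) would be finite, so by generic flatness and constructibility there would be a dense open \(S^\circ\subset S\) over which \(\mathcal Z\) is quasi-finite, and hence \(\mathcal Z_s\ne\mathcal C_s\) for every \(s\in S^\circ\). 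Since \(\Sigma\) is dense, it meets \(S^\circ\), which is a contradiction. Therefore \(\mathcal Z\supset\mathcal C_\eta\).

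Over the generic fiber, and after passing to \(\overline{K(S)}\) and then to \(\mathbb C\) by base change, every point of \(\sigma_\eta(\mathcal C_\eta)\) then lies on a vertical genus-zero stable map of \(\mathcal A\)-degree at most \(aN\). Extracting a component through the point, as in Lemma~\ref{lem:bounded-vertical-incidence-closed}, gives a vertical rational curve with \(\mathcal H\cdot R\le N\).

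I expect the main obstacle to be the relative properness of the stable-map stack over the mixed-characteristic base \(S\), together with the compatibility of the vertical incidence locus with base change. I would handle this by citing the relative construction of Kontsevich moduli over a noetherian base, namely Abramovich--Oort or the Artin-stack version via Hom stacks, and then shrinking \(S\).
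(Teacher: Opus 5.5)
Your proposal is correct and follows essentially the same route as the paper. Both arguments take a proper relative stack of bounded-degree one-pointed genus-zero stable maps and its closed $\pi$-vertical substack. Both then use the closed image $\mathcal Z\subset\mathcal C$ of the incidence with $\sigma$, and observe that if $\mathcal Z$ missed part of $\mathcal C_\eta$ it would be quasi-finite over a dense open of $S$, contradicting the Zariski-dense set of good closed fibers. The remaining differences are only refinements: you index by a very ample multiple $a\mathcal H$, use semicontinuity of fiber dimension instead of the paper's ``closure of $Z_\eta$ is finite over $S$'', and explicitly flag mixed-characteristic properness of the stable-map stack, which the paper simply asserts.
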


\begin{proof}
For \(1\le d\le N\), let \(\overline M_{0,1}(\mathcal X/S,d)\) be the Kontsevich stack of one-pointed genus-zero stable maps to \(\mathcal X/S\) of \(\mathcal H\)-degree \(d\).
Since \(\mathcal X\to S\) is projective and \(N\) is fixed, the finite union over \(d\le N\) is proper over \(S\).
Let \(\mathcal V_{\le N}\subset\coprod_{d\le N}\overline M_{0,1}(\mathcal X/S,d)\) be the closed substack of stable maps that are vertical for \(\pi\), i.e.\ whose composition with \(\pi\) has degree zero on every component; this is closed because it is the inverse image of the degree-zero locus in the proper stable-map stack of \(\mathcal B/S\).

The fibered product
\[
I_{\le N}\;:=\;\mathcal V_{\le N}\times_{\mathcal X}\mathcal C
\]
is proper over \(\mathcal C\), so its image \(Z\subset\mathcal C\) is closed.
Fiberwise, \(Z_s\) is exactly the set of points of \(\mathcal C_s\) whose image in \(\mathcal X_s\) lies on a \(\pi_s\)-vertical rational curve of \(\mathcal H_s\)-degree at most \(N\).
If \(Z_\eta\) were a proper closed subset of the smooth curve \(\mathcal C_\eta\), then, after shrinking \(S\), its closure in \(\mathcal C\) would be finite over \(S\) and hence proper in every nearby closed fiber, contradicting the assumed dense set of closed fibers in which \(Z_s\) contains a dense open subset of \(\mathcal C_s\).
Hence \(Z_\eta\) is dense in \(\mathcal C_\eta\), and being closed it equals \(\mathcal C_\eta\).
\end{proof}

\begin{rem}
Lemma~\ref{lem:characteristic-zero-vertical-spreading} is the closed-locus form of the characteristic-zero spreading step in the proof of~\cite[Theorem~1.1]{JLR25b}: the extension from characteristic \(p\) to characteristic zero uses spreading-out as in~\cite[Theorem~5]{MM86}, and nef real divisors are handled as in~\cite[Theorem~II.5.8]{Kol96}.
The additional verticality condition here is closed in the same proper substack.
\end{rem}

\subsection{Optimal bend-and-break for ample subfoliations}

\begin{lem}\label{lem:ample-subfoliation-optimal-length}
Let \(\Gg\subset T_X\) be a rank \(s\) foliation, and let \(C\subset X\) be a sufficiently general complete-intersection curve such that \(\Gg\) is regular along \(C\) and \(\Gg|_C\) is ample.
Let \(H\) be an ample divisor.
Then through a general point of \(C\) there exists a rational curve \(\Sigma\) tangent to \(\Gg\) satisfying
\[
H\cdot\Sigma\;\le\;(s+1)\,\frac{H\cdot C}{-K_{\Gg}\cdot C}.
\]
\end{lem}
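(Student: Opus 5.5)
The plan is to build the relative model of Lemma~\ref{lem:graph-neighborhood-relative-model}, reduce it modulo \(p\), apply Lemma~\ref{lem:bm-jlr-relative-positive-characteristic}, bring the resulting curves back to characteristic zero with Lemma~\ref{lem:characteristic-zero-vertical-spreading}, and push them forward by \(\phi\).

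\emph{Step 1: the relative model.} Since \(C\) is a sufficiently general complete intersection, it is smooth and lies in \(X_{\rm reg}\). Choose a finite morphism \(\nu\colon B\to C\) of degree \(e\) from a smooth projective curve of positive genus (if \(g(C)\ge1\), take \(\nu=\mathrm{id}\)). Lemma~\ref{lem:graph-neighborhood-relative-model} then gives \(\rho\colon W\to B\), \(\phi\colon W\to X\) and \(\sigma\colon B\to W\) with the following properties:
\begin{itemize}
\item \(\rho\) is flat of relative dimension \(s\) and smooth near \(\sigma(B)\);
\item \(\phi\circ\sigma=\nu\);
\item \(\delta:=\deg_B\sigma^*T_{W/B}=e(-K_\Gg\cdot C)>0\);
\item \((\phi^*H)\cdot\sigma(B)=e\,H\cdot C\).
\end{itemize}
Since \(\phi^*H\) is only nef, I fix an ample Cartier divisor \(A\) on \(W\) and work with \(L_t=\phi^*(aH)+tA\) for rational \(0<t\ll1\), where \(a\) is chosen so that \(aH\) is Cartier. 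After clearing denominators, \(L_t\) is an ample Cartier divisor.

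\emph{Step 2: reduction mod \(p\).} I spread out \((X,W,B,\rho,\phi,\sigma,A,H)\) over a finitely generated \(\mathbb Z\)-algebra, giving an integral base \(S\). After shrinking \(S\), flatness, smoothness of \(\rho\) near the section, the degree \(\delta\), and the intersection numbers are preserved in every closed fiber. In each closed fiber of characteristic \(p\gg0\) I apply Lemma~\ref{lem:bm-jlr-relative-positive-characteristic} with \(\Gamma=B_s\), \(f=\sigma_s\), \(h=\mathrm{id}\) and ample divisor \(L_t\). This gives, through a general point of \(\sigma_s(B_s)\), a \(\rho_s\)-vertical rational curve \(R_s\) with
\[
L_t\cdot R_s\le (s+1)\frac{L_t\cdot\sigma(B)}{\delta}=:N_t ,
\]
and \(N_t\) is independent of \(s\).

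\emph{Step 3: back to characteristic zero.} Lemma~\ref{lem:characteristic-zero-vertical-spreading}, applied with the integer bound \(\lfloor N_t\rfloor\), lifts this to \(\mathbb C\): a general point of \(\sigma(B)\) lies on a \(\rho\)-vertical rational curve \(R\) with \(L_t\cdot R\le N_t\). By the final clause of Lemma~\ref{lem:graph-neighborhood-relative-model}, \(\Sigma=\phi(R)\) is a nonconstant rational curve tangent to \(\Gg\) through a general point of \(C\), and \(aH\cdot\phi_*R\le L_t\cdot R\). Hence
\[
H\cdot\Sigma\le (s+1)\frac{e\,H\cdot C+(t/a)A\cdot\sigma(B)}{e(-K_\Gg\cdot C)}.
\]
Here I use \(H\cdot\Sigma\le H\cdot\phi_*R\), because \(\phi_*R\) is a positive multiple of \(\Sigma\).

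\emph{Step 4: removing \(t\) (main obstacle).} It remains to let \(t\to0\). For a single \(t>0\) the bound above carries the error term \((t/a)A\cdot\sigma(B)\), which has to be removed. I would do it as follows. Every candidate curve satisfies \(L_{t_0}\cdot R\le N_{t_0}\) for a fixed \(t_0\), so the vertical rational curves in play form a bounded family, and only finitely many values of \(aH\cdot\phi_*R\) occur. Along a sequence \(t\to0\), the curves through a general point (general for all countably many \(t\) at once) therefore have \(aH\cdot\phi_*R\) ranging over this finite set. The inequality \(aH\cdot\phi_*R\le N_t\) holds for all small \(t\), so passing to the limit gives \(aH\cdot\phi_*R\le N_0\), which is the stated bound. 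To make this rigorous at a general point, I would reuse the closed-locus argument of Lemma~\ref{lem:bounded-vertical-incidence-closed}, applied to the bounded family with the additional closed condition \(\phi^*H\cdot R\le N_0/a\). That locus is closed, and it contains a general point, so it is all of \(\sigma(B)\).
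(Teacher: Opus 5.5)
Your Steps 1--3 coincide with the paper's proof. You use the same BM/KST model from Lemma~\ref{lem:graph-neighborhood-relative-model}, and the same perturbation of the merely nef class $\phi^*H$ by a small multiple of an ample $A$ on $W$. You apply Lemma~\ref{lem:bm-jlr-relative-positive-characteristic} with $\Gamma=B$, $f=\sigma$, $h=\mathrm{id}_B$ in closed fibers, and return to characteristic zero through Lemma~\ref{lem:characteristic-zero-vertical-spreading}, exactly as the paper does. Your use of $aH$ to allow $\mathbb{Q}$-Cartier $H$ is a harmless refinement.

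The gap is in Step~4. You assert that every candidate curve satisfies $L_{t_0}\cdot R\le N_{t_0}$ for one fixed $t_0$; this is not justified and is false in general. For $t<t_0$, the curve $R_t$ from Step~3 only satisfies $aH\cdot\phi_*R_t+t\,A\cdot R_t\le N_t$. That bounds $A\cdot R_t$ only by $N_t/t$, so $L_{t_0}\cdot R_t$ can be as large as roughly $(t_0/t)N_t$. Hence the curves obtained along a sequence $t\to 0$ need not lie in a single bounded family. Since $\phi^*H$ is not ample on $W$, the condition $\phi^*H\cdot R\le N_0/a$ alone does not a priori cut out a finite-type part of the stable-map stack. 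So Lemma~\ref{lem:bounded-vertical-incidence-closed} cannot be applied as you propose to pass from a very general point of $\sigma(B)$ to a general one.

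The repair is the integrality you already half-use, applied to a single $t$ instead of a sequence; this is exactly how the paper concludes. Because $aH$ is Cartier and $R$ is not contracted by $\phi$, $aH\cdot\phi_*R$ is a positive integer at most $N_t$, and $N_t\to N_0$ as $t\to 0^+$. Fix one rational $t>0$ with $N_t<\lfloor N_0\rfloor+1$. Take any point in the dense open subset of $\sigma(B)$ that Step~3 supplies for this particular $t$. The curve $R$ through it satisfies $aH\cdot\phi_*R\le\lfloor N_0\rfloor\le N_0$, which is the claimed bound. No sequence of $t$'s, no very general points, and no further closedness argument are needed.

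Your sequence argument could also be rescued. For each fixed $t_j$, take the closed locus $Z_j$ defined using the ample $L_{t_j}$. A countable union of closed subsets of the curve $B$ that contains a co-countable set must have some $Z_j=B$. This is strictly more work than the one-line integrality argument.
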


\begin{proof}
Since \(\Gg|_C\) is ample, \(-K_{\Gg}\cdot C=\deg(\Gg|_C)>0\).
Choose a finite cover \(\nu\colon B\to C\) of degree \(e\) from a smooth projective curve of positive genus, and let
\[
\rho\colon W\to B,\qquad \phi\colon W\to X,\qquad \sigma\colon B\to W
\]
be the BM/KST graph-neighborhood model produced by Lemma~\ref{lem:graph-neighborhood-relative-model}, smooth along the section, with \(T_{W/B}|_{\sigma(B)}\simeq(\phi\circ\sigma)^*\Gg\).
The relevant degrees are
\[
(\phi^*H)\cdot\sigma(B)\;=\;e\,H\cdot C,\qquad \deg_\sigma T_{W/B}\;=\;e\,(-K_{\Gg}\cdot C).
\]

Choose an ample Cartier divisor \(A\) on \(W\).
For every positive rational \(\varepsilon\), the \(\mathbb Q\)-Cartier divisor
\[
L_\varepsilon\;=\;\phi^*H+\varepsilon A
\]
is ample on \(W\).
We spread the graph-neighborhood data and \(L_\varepsilon\) over a finitely generated \(\mathbb Z\)-subalgebra \(S\) of \(\mathbb C\) in the standard way (cf.~\cite[Theorem~II.5.10]{Kol96}): any such finite-type datum spreads, and the properties constructible on \(\Spec S\), namely smoothness of \(\rho\) near the section, ampleness and Cartier class of \(L_\varepsilon\), the isomorphism class of \(T_{W/B}|_{\sigma(B)}\), and the values of the intersection numbers \(L_\varepsilon\cdot\sigma(B)\) and \(e\,(-K_{\Gg}\cdot C)\), all hold on a Zariski-dense open subset of \(\Spec S\), and hence at closed fibers of arbitrarily large residue characteristic.
Applying Lemma~\ref{lem:bm-jlr-relative-positive-characteristic} (with the abstract source curve \(\Gamma\) instantiated to the cover \(B\), and with \(f=\sigma\), \(h=\mathrm{id}_B\)) to a Cartier multiple of \(L_\varepsilon\) produces, through a general point of the section in such a closed fiber, a vertical rational curve of \(L_\varepsilon\)-degree at most
\[
(s+1)\,\frac{L_\varepsilon\cdot\sigma(B)}{e\,(-K_{\Gg}\cdot C)}.
\]
For vertical rational curves through a general point of the section, Lemma~\ref{lem:graph-neighborhood-relative-model} identifies their pushforwards with \(\Gg\)-tangent rational curves on \(X\).

We now pass to characteristic zero by Lemma~\ref{lem:characteristic-zero-vertical-spreading} (with \(\mathcal X = W\), \(\mathcal B = \mathcal C = B\), \(\pi = \rho\), and \(\sigma\) the section, all spread to families over the integral base \(\Spec S\), and \(\mathcal H\) a Cartier multiple of \(L_\varepsilon\)), applied to the corresponding integer degree bound; the hypotheses are met because the graph-neighborhood data, the section, the divisor \(L_\varepsilon\), and the verticality condition all spread over a finitely generated \(\mathbb Z\)-algebra.
Therefore, in characteristic zero, a general point of the section lies on a vertical rational curve \(R_\varepsilon\subset W\) satisfying
\[
L_\varepsilon\cdot R_\varepsilon\;\le\;(s+1)\,\frac{L_\varepsilon\cdot\sigma(B)}{e\,(-K_{\Gg}\cdot C)}.
\]
Since \(\phi^*H\cdot R_\varepsilon\le L_\varepsilon\cdot R_\varepsilon\) and \(L_\varepsilon\cdot\sigma(B)=e\,H\cdot C+\varepsilon\,A\cdot\sigma(B)\), the \(H\)-degree of the pushforward satisfies
\[
H\cdot\phi_*R_\varepsilon\;\le\;(s+1)\,\frac{e\,H\cdot C+\varepsilon\,A\cdot\sigma(B)}{e\,(-K_{\Gg}\cdot C)}.
\]
Set
\[
b_0\;=\;(s+1)\,\frac{H\cdot C}{-K_{\Gg}\cdot C},\qquad
b_\varepsilon\;=\;(s+1)\,\frac{e\,H\cdot C+\varepsilon\,A\cdot\sigma(B)}{e\,(-K_{\Gg}\cdot C)}.
\]
Then \(b_\varepsilon\to b_0\) as \(\varepsilon\to 0^+\).
For arbitrarily small \(\varepsilon>0\), the preceding construction yields a nonconstant rational curve through a general point of the section whose \(H\)-degree is a positive integer at most \(b_\varepsilon\); in particular \(b_0\ge 1\).
Choose \(\varepsilon\) so small that \(b_\varepsilon<\lfloor b_0\rfloor+1\).
For the dense open subset of section points supplied by this single \(\varepsilon\), every pushforward curve has integral \(H\)-degree at most \(b_\varepsilon\), hence at most \(\lfloor b_0\rfloor\le b_0\).
We conclude that a general point of the section lies on a vertical rational curve \(R\subset W\) with
\[
H\cdot\phi_*R\;\le\;(s+1)\,\frac{H\cdot C}{-K_{\Gg}\cdot C}.
\]
By Lemma~\ref{lem:graph-neighborhood-relative-model}, \(\Sigma:=\phi(R)\) is a rational curve tangent to \(\Gg\) through the corresponding general point of \(C\), and the projection formula gives \(H\cdot\Sigma\le(\phi^*H)\cdot R\).
This proves the lemma.
\end{proof}

\section{Proof of the main theorem}\label{sec:proof}

\begin{proof}[Proof of Theorem~\ref{thm:main}]
For \(m_i\gg 0\) and general \(D_i\), the curve \(C\) avoids the codimension-two singular and non-locally-free loci relevant to \(X\) and \(\Ff\).
Since \(K_{\Ff}\cdot C<0\),
\[
\deg(\Ff|_C)\;=\;\det(\Ff)\cdot C\;=\;-K_{\Ff}\cdot C\;>\;0.
\]
Apply Theorem~\ref{thm:curve-level-positive-hn} to obtain a positive-rank subfoliation \(\Gg\subset\Ff\), of rank \(s\le r\), such that \(\Gg|_C\) is ample and
\[
-K_{\Gg}\cdot C\;\ge\;-K_{\Ff}\cdot C.
\]
Lemma~\ref{lem:ample-subfoliation-optimal-length} gives a rational curve \(\Sigma\) through a general point of \(C\), tangent to \(\Gg\), with
\[
H\cdot\Sigma\;\le\;(s+1)\,\frac{H\cdot C}{-K_{\Gg}\cdot C}.
\]
Since \(\Gg\subset\Ff\), the curve is tangent to \(\Ff\).
Since \(s\le r\) and \(-K_{\Gg}\cdot C\ge -K_{\Ff}\cdot C>0\),
\[
(s+1)\,\frac{H\cdot C}{-K_{\Gg}\cdot C}\;\le\;(r+1)\,\frac{H\cdot C}{-K_{\Ff}\cdot C}.
\]
This proves the required bound.
\end{proof}

\begingroup
\sloppy
\emergencystretch=3em

\endgroup

\end{document}